\documentclass[11pt]{amsart}

\usepackage[english]{babel}
\usepackage[latin1]{inputenc}
\usepackage{amsthm,amsmath,amsfonts, amssymb,bbm}
\usepackage{stmaryrd}


\def \RR{\mathbb R}
\def \EE{\mathbb E}
\def \PP{\mathbb P}
\def \NN{\mathbb N}

\def \SS{\mathbb S}

\def \I{\mathcal I}

\def \N{\mathcal N}

\def \G{\mathcal G}

\def \sclr#1#2{\langle #1,#2\rangle}
\def \open#1{#1^{o}}
\def \close#1{\overline{#1}}
\def \inter#1#2{\llbracket #1,#2\rrbracket}

\renewcommand{\geq}{\geqslant}
\renewcommand{\leq}{\leqslant}

\oddsidemargin=0pt \evensidemargin=0pt \textwidth=150mm
\textheight=21.00cm \voffset=0mm \pretolerance=3000

\linespread{1.05}




\theoremstyle{plain}
\newtheorem{theorem}{Theorem}
\newtheorem{proposition}[theorem]{Proposition}
\newtheorem{corollary}[theorem]{Corollary}
\newtheorem{lemma}[theorem]{Lemma}

\theoremstyle{definition}
\newtheorem{definition}[theorem]{Definition}

\newtheorem{example}[theorem]{Example}


\begin{document}

\author[R.~Garbit]{Rodolphe Garbit}
\address{Universit\'e d'Angers\\D\'epartement de Math\'ematiques\\ LAREMA\\ UMR CNRS 6093\\ 2 Boulevard Lavoisier\\49045 Angers Cedex 1\\ France}
\email{rodolphe.garbit@univ-angers.fr}

\title[]{On the exit time from an orthant for badly oriented random walks}
\subjclass[2000]{60G40; 60G50}
\keywords{Random walk; Cones; Exit time; Laplace transform}

\thanks{}

\date{\today}

\begin{abstract} 
It was recently proved that the exponential decreasing rate of the probability that a random walk stays in a $d$-dimensional orthant is given by the minimum on this orthant of the Laplace transform of the random walk increments, provided that this minimum exists. In other cases, the random walk is ``badly oriented'' and the exponential rate may depend on the starting point $x$. We show here that this rate is nevertheless asymptotically equal to the infimum of the Laplace transform, as some selected coordinates of $x$ tend to infinity.
\end{abstract}

\maketitle

\section{Introduction}

\subsection{Context}
This work is a continuation of the paper~\cite{GaRa13} in which the authors studied the exponential decreasing rate of the probability that a random walk (with some exponential moments) stays in a $d$-dimensional convex cone, and found that this rate is equal to the minimum on the dual cone of the Laplace transform of the random walk increments, provided that this minimum exists. In the present work, we shall restrict our attention to the case where the cone is a $d$-dimensional orthant, and extend the preceding result so as to cover also the remaining cases where the random walk is ``badly oriented'' -- a terminology that will be explained later -- with respect to the orthant.

In order to be more specific, let us introduce some notations.
For any fixed probability distribution $\mu$ on $\RR^d$, let $\PP^x_{\mu}$ denote the probability measure on $\RR^{\infty}$ under which the canonical process
$(S_0,S_1,\ldots,S_n,\ldots)$ is a random walk started at $x$ (meaning that $S_0=x$ a.s.) whose independent increments $S_{n+1}-S_n$ have distribution~$\mu$.

Let $K\subset\RR^d$ be some convex cone with non-empty interior and let
$$\tau_K=\inf\{n\geq 1 : S_n\notin K\}$$
denote the exit time of the random walk from $K$.

For random walks with no drift, the precise asymptotics
$$\PP^x_\mu(\tau_K>n)=c \rho^n n^{-\alpha}(1+o(1)),\quad n\to\infty$$
was derived by Denisov and Wachtel in~\cite{DeWa11} from the corresponding tail distribution for Brownian motion by using a strong approximation theorem. In that case $\rho=1$.
They also obtained a local limit theorem from which
Duraj could derive in~\cite{Dur13} the presice asymptotics for random walks with ``negative'' drift, that is when the global minimum on $\RR^d$ of the Laplace transform
$$L_{\mu}(z)=\int_{\RR^d}e^{\sclr{z}{y}}\mu(dy)$$
is reached at an interior point of the dual cone
$$K^*=\{x\in \RR^d : \sclr{x}{y}\geq 0, \forall y\in K\}.$$
In that case, he found that $\rho=\min_{\RR^d}L_{\mu}$. The problem of determining the exponential rate only, but disregarding the position of the global minimum on $\RR^d$, was solved with great generality in \cite{GaRa13}. In that paper, we found that the right place to look at is the position of the {\em global minimum on the dual cone $K^*$} of the Laplace transform.
Indeed, the main result in \cite{GaRa13} is that the exponential decreasing rate
$$\rho_x=\liminf_{n\to\infty}\PP^x_{\mu}(\tau_K>n)^{1/n}$$
is given by the identity
\begin{equation}
\label{old_thm}
\rho_x=\min_{K^*}L_{\mu},
\end{equation}
for all $x$ far enough from the boundary of $K$, provided that this minimum exists. Note that in this case, there is essentially no dependence in the starting point $x$. 

The goal of the present work is to study the case where this minimum does not exist. For technical reasons (that should become clearer when reading the rest of the paper), we shall restrict our attention to the case where $K=Q$ is the positive orthant
$$Q=\{x\in\RR^d : x_i\geq 0, i=1\ldots d\},$$
where $x_i$ denotes the $i$th coordinate of $x$ with respect to the standard basis $(e_1,e_2,\ldots,e_d)$. Note that $Q^*=Q$. In addition, in order to simplify the exposition, we will assume that the probability distribution $\mu$ has all exponential moments, that is, $L_{\mu}(z)$ is finite for all $z\in\RR^d$.

For a truly $d$-dimensional distribution $\mu$, i.e.~a distribution whose support is not included in any linear hyperplane, the condition that $L_{\mu}$ reaches a global minimum on $Q$ is equivalent to the following geometric condition (see \cite{GaRa13} for a proof):
\begin{itemize}
\item[(H)] The support of $\mu$ is not included in any half-space $u^-=\{x\in\RR^d : \sclr{x}{u}\leq 0\}$ with $u\in Q\setminus\{0\}$.
\end{itemize}
Random walks with a distribution $\mu$ that does not fulfill condition (H) are called {\em badly oriented}. In this case, the exponential rate $\rho_x$ may depend on the starting point $x$. 

\begin{example}
Consider the $2$-dimensional lattice distribution $\mu$ defined by
$$\mu(1,-1)=\mu(-1,1)=q, \quad \mu(-1,-1)=p,\quad p+2q=1,\quad p,q>0.$$
The corresponding random walk is badly oriented since the support of $\mu$ is included in $(1,1)^-$.
Its Laplace transform satisfies the relations
$$L_{\mu}(i,j)=2q\cosh(i-j)+pe^{-(i+j)}>2q,$$
and
$$\lim_{i\to +\infty} L_{\mu}(i,i)=2q.$$
Therefore, the infimum $2q$ of $L_{\mu}$  on $Q$ is not a minimum.
It is proved in \cite{GaRa13} that, for $x=(i,j)\in \NN_+^2$ with $i+j=2N$, 
$$\rho_x=2q\cos\left(\frac{\pi}{2N+2}\right).$$
Thus $\rho_x$ depends on $x$, but we nevertheless observe that
$$\lim_{\Vert x\Vert\to\infty}\rho_x=2q=\inf_{Q}L_{\mu}.$$
\end{example}

The aim of this paper is to explain this phenomenon by giving a ``universal'' result that applies to both well and badly oriented random walks. More precisely, we shall prove that the equality~\eqref{old_thm} is in fact a particular case of the following equality
$$\lim\rho_x=\inf_Q L_{\mu},$$
where the interpretation of the limit symbol depends on a linear subspace associated with $\mu$ that we call the {\em reduced support} of $\mu$.

The central idea in~\cite{GaRa13} was to perform a standard Cram\'er transformation (i.e.~an exponential change of measure) based on the point $x_0$ where $L_{\mu}$ reaches its minimum on $Q$.
The main novelty here is that we provide a way to achieve this transformation when $x_0$ is ``at infinity''.

\subsection{Reduction}
The assumption (H) was used in \cite{GaRa13} to ensure the existence of a global minimum of $L_{\mu}$ on $Q$.
The implication follows easily from the following property of the Laplace transform (see \cite[Lemma 3]{GaRa13}):
$$
\lim_{t\to +\infty}L_{\mu}(z+tu)=
\begin{cases}
+\infty &\mbox{ if } \mu(u^-)<1\\
\int_{u^\perp} e^{\sclr{z}{y}}\mu(dy) &\mbox{ if } \mu(u^-)=1,
\end{cases}
$$
where $u^\perp$ denotes the hyperplane orthogonal to $u$.
Indeed, for a distribution satisfying assumption (H), the Laplace transform tends to infinity in each direction $u\in Q\cap\SS^{d-1}$. Since this function is convex, this implies by a compactness argument that $L_{\mu}$ is coercive in $Q$, and the existence of a global minimum follows.

Suppose now that the random walk is badly oriented, i.e.~that there exists a direction $u_1\in Q\cap \SS^{d-1}$ such that $\mu(u_1^-)=1$. Then, the ``best'' way to stay in $Q$ is certainly not to perform a step outside of $u_1^{\perp}$. Thus, it seems natural to compare our original random walk with a conditioned version of it, namely the random walk with distribution $\nu(*)=\mu(*\vert u_1^{\perp})$. Since this new random walk may still be badly oriented, we shall construct by induction a {\em reduced support} $V$ in the following way:

\begin{definition}
\label{def:admissible_maximal}
An $r$-tuple $(u_1,u_2,\ldots, u_r)\in Q^r$ is {\em admissible} if
\begin{itemize}
\item[(i)] the vectors $u_1, u_2, \ldots, u_r$ are linearly independent, and
\item[(ii)] they satisfy the following relations:
$$
\begin{cases}
\mu(u_1^-)=1 &\\
\mu(u_2^-\cap u_1^\perp)=\mu(u_1^\perp) &\\
\vdots &\\
\mu(u_r^{-}\cap (u_{r-1}^\perp\cap\cdots\cap u_1^\perp))=\mu(u_{r-1}^\perp\cap\cdots\cap u_1^\perp).
\end{cases}
$$
\end{itemize}
An admissible $r$-tuple  $(u_1,u_2,\ldots, u_r)$ is {\em maximal} if there is no $u\in Q$ such that $(u_1,u_2,\ldots, u_r, u)$ be admissible.
\end{definition}

If there exists an admissible tuple (i.e. if the random walk is badly oriented), then there exist maximal tuples, and the linear subspace
$[u_1,u_2,\ldots,u_r]$ generated by a maximal tuple does not depend on the specific maximal tuple chosen (Lemma \ref{lem:reduced_support_uniqueness}).

\begin{definition}
\label{def:reduced_support}
The orthogonal complement
$$
V=[u_1,u_2,\ldots,u_r]^\perp
$$
of the subspace generated by any maximal tuple is called the {\em reduced support} of $\mu$.
By convention, if there is no admissible tuple, we set $V=\RR^d$.
\end{definition}

With the reduced support $V$, we associate the set $I$ of indices $i\in\inter{1}{d}$ such that $e_i\in V$, and the cone
$$V^+=\{x\in V : \sclr{x}{e_i}\geq 0, \forall i\in I\}.$$
Then (Lemma~\ref{lem:minimum_reduction}) the infimum on $Q$ of the Laplace transform of $\mu$ is given by the relation
\begin{equation}
\label{eq:inf_decomposition}
\inf_Q L_{\mu}=\inf_{v\in V^+}\int_{V}e^{\sclr{v}{y}}\mu(dy).
\end{equation}
If $\inf_{Q}L_{\mu}=0$, then
$$
\rho_x=\liminf_{n\to\infty}\PP_{\mu}^x(\tau_Q>n)^{1/n}=0
$$
for all $x\in Q$, since $\inf_{Q}L_{\mu}$ is a universal upper bound for $\rho_x$ (see equation~\eqref{eq:universal_upper_bound}).
Thus, from now on, we will exclude this degenerate case by assuming
$$
\inf_{Q}L_{\mu}>0.
$$
Therefore $\mu(V)>0$ and equality~\eqref{eq:inf_decomposition} can be rewritten as
\begin{equation}
\label{eq:inf_decomposition_2}
\inf_Q L_{\mu}=\mu(V)\inf_{V^+} L_{\mu\vert V},
\end{equation}
where $\mu\vert V$ denotes the conditional distribution $\mu\vert V(*)=\mu(*\vert V)$. Now, the ``maximality'' of $V$ (in the sense of Definition~\ref{def:admissible_maximal}) ensures
that the conditioned random walk with distribution $\mu\vert V$ is well oriented with respect to $V^+$, i.e.~that the infimum on $V^+$ is in fact a minimum
(Lemma \ref{lem:existence_of_global_minimum}), so that a Cram\'er transformation can be applied to this conditioned random walk. We interpret this as a Cram\'er transformation at infinity.

\subsection{Main result}
In what follows, $\mu$ is any probability distribution on $\RR^d$ with all exponential moments.
To avoid trivialities, we assume
$$\inf_{Q}L_{\mu}>0.$$
We denote by $V$ the reduced support of $\mu$ (see Definitions \ref{def:admissible_maximal} and \ref{def:reduced_support}), and define
$$I=\{i\in\inter{1}{d}: e_i\in V\},\quad I^\perp=\{i\in\inter{1}{d}: e_i\in V^\perp\},$$
and, for $x$ in $Q$,
$$d(x)=\min_{i\notin I\cup I^\perp}{x_i},$$
with the convention that $d(x)=\infty$ when $I\cup I^\perp=\inter{1}{d}$. 
Finally, we set
$$Q_{\delta}=Q+\delta(1,1,\ldots,1)=\{x\in \RR^d: x_i\geq \delta, \forall i\in\inter{1}{d}\}.$$
We are now in position to state our main result.

\begin{theorem}
\label{thm:main_theorem}
There exists $\delta\geq 0$ such that
$$\lim_{d(x)\to \infty \atop x\in Q_{\delta}}\rho_x=\inf_{Q}L_{\mu}.$$
\end{theorem}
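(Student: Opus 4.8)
The plan is to prove the two inequalities $\limsup \rho_x \le \inf_Q L_\mu$ and $\liminf \rho_x \ge \inf_Q L_\mu$ (the limits taken as $d(x)\to\infty$ inside $Q_\delta$ for a suitable $\delta$). The upper bound is the easy half: the inequality $\rho_x \le \inf_Q L_\mu$ holds for \emph{every} $x\in Q$ and is exactly the universal upper bound referenced as \eqref{eq:universal_upper_bound} in the excerpt, obtained by feeding a point $z\in Q$ close to the infimum into the exponential Chebyshev/Markov bound $\PP^x_\mu(\tau_Q>n)\le e^{-\sclr{z}{x}}L_\mu(z)^n$ (valid because $S_n\in Q$ and $z\in Q$ force $\sclr{z}{S_n}\ge 0$). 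So the real work is the matching lower bound, and this is where the reduced support and the ``Cram\'er transformation at infinity'' enter.

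For the lower bound I would first reduce, via \eqref{eq:inf_decomposition_2}, to studying the conditioned walk with increment law $\bar\mu := \mu\vert V$, which by Lemma~\ref{lem:existence_of_global_minimum} is well oriented with respect to the cone $V^+$ inside the subspace $V$; let $z_0\in V^+$ be the point where $L_{\bar\mu}$ attains its minimum $m:=\inf_{V^+}L_{\bar\mu}$, so that $\inf_Q L_\mu = \mu(V)\,m$. The idea is then: starting from $x\in Q_\delta$ with the coordinates outside $I\cup I^\perp$ (i.e.\ $d(x)$) very large, the walk can afford to first take a bounded number of ``free'' steps that do not belong to $V$ — each such step costs only a constant probability factor and moves the walk to a point whose $V^\perp$-components and whose coordinates in $I$ are controlled, while $d(\cdot)$ stays large — and thereafter perform only steps in $V$, i.e.\ behave like the $\bar\mu$-walk. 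Concretely, one writes $\PP^x_\mu(\tau_Q>n)\ge (\text{const})\cdot \mu(V)^{\,n-k}\,\PP^{x'}_{\bar\mu}(\tau_{Q}>n-k)$ for the post-reduction configuration $x'$, where the $\mu(V)^{n-k}$ accounts for conditioning on all remaining steps lying in $V$. Then apply the known result \eqref{old_thm} for the \emph{well-oriented} walk $\bar\mu$ on the cone it lives in: $\PP^{x'}_{\bar\mu}(\tau>n-k)^{1/(n-k)}\to m$ provided $x'$ is far enough from the relevant boundary. Taking $n\to\infty$ yields $\rho_x \ge \mu(V)\,m = \inf_Q L_\mu$, uniformly once $d(x)$ is large.

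The key steps, in order, are: (1) record the universal upper bound to dispose of ``$\limsup \le$''; (2) use Lemma~\ref{lem:minimum_reduction} and \eqref{eq:inf_decomposition_2} to pass to $\bar\mu=\mu\vert V$, and invoke Lemma~\ref{lem:existence_of_global_minimum} to get a genuine minimizer $z_0\in V^+$; (3) run the Cram\'er transform for $\bar\mu$ based at $z_0$ inside $V$ — this is the ``transformation at infinity'' — turning the problem into a zero-or-positive-drift walk in $V$ to which the analysis of \cite{GaRa13} or \cite{DeWa11} applies, giving the rate $m$ for the conditioned walk; (4) glue: show that from any $x\in Q_\delta$ with $d(x)$ large, a bounded ``bridge'' of non-$V$ steps reaches an admissible configuration $x'$ for the $\bar\mu$-walk (far enough from the boundary in the $V^+$-sense) at the cost of a constant factor, and conditioning the remaining $n-k$ steps on $\{\text{step}\in V\}$ produces the factor $\mu(V)^{n-k}$; (5) combine the estimates and let $n\to\infty$, then $d(x)\to\infty$, checking the bound is uniform — which is what fixes the threshold $\delta$.

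The main obstacle I expect is step (4): making the ``bridge'' argument precise and, above all, uniform in $x$. One must show that for all $x\in Q_\delta$ with $d(x)\ge$ some threshold, there is a fixed finite set of increment values in the support of $\mu$, used a bounded number of times $k$, whose sum $y$ is such that $x+y$ has its $V^\perp$-coordinates and its $I$-coordinates pushed into the interior of the region where \eqref{old_thm} for $\bar\mu$ kicks in, while the large coordinates (those counted by $d(x)$) remain positive — this is exactly where one needs that the increments not confined to $V$ can be chosen to increase, or at least not destroy, the relevant coordinates, and it is delicate because those increments may have negative components in the ``free'' directions. Controlling the interplay between the directions $I$, $I^\perp$, and their complement, and showing that $d(x)\to\infty$ is precisely the right mode of convergence to make the bridge cheap and the conditioned walk's starting point admissible, is the technical heart of the proof; everything else is either classical (the Cram\'er tilt and the asymptotics for well-oriented walks) or bookkeeping with the identities \eqref{eq:inf_decomposition} and \eqref{eq:inf_decomposition_2}.
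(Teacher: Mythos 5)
Your outline has the right skeleton --- the universal upper bound \eqref{eq:universal_upper_bound}, the reduction to the conditioned law $\mu\vert V$ via $\PP^x_\mu(\tau_Q>n)\geq\mu(V)^n\,\PP^x_{\mu\vert V}(\tau_Q>n)$, and the Cram\'er tilt at the minimizer $v_0$ of $L_{\mu\vert V}$ on $V^+$ --- and this matches the paper up to that point. (Your ``bridge'' of $k$ non-$V$ steps is not needed: the paper conditions \emph{all} steps to lie in $V$ and decomposes the starting point as $x=v+w$ with $v\in V$, $w\in V^\perp$; no preliminary displacement of $x$ is required.)

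The genuine gap is in your steps (3)--(4), where you propose to finish by invoking the well-oriented result \eqref{old_thm} for the tilted walk in $V$. That result does not apply here, because the region the conditioned walk must stay in is not a cone of the type it covers: it is $(Q-w)\cap V$, and Lemma~\ref{lem:set_where_conditioned_walk_should_stay} shows this set only contains a product of the orthant $V_1^+$ (in the directions $e_i$, $i\in I$) with a \emph{ball of radius $d(x)$} around $v^{(2)}$ in the complementary directions $V_2$. The coordinates $i\notin I\cup I^\perp$ of $S_n+w$ stay nonnegative only as long as the $V_2$-component of the walk has not wandered farther than $d(x)$ from its start; after the tilt this component has zero drift but still diffuses. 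Moreover the Cram\'er prefactor $e^{-\sclr{v_0}{S_n}}$ must itself be bounded below, which forces a second bounded-window constraint on the zero-mean coordinates indexed by $K$. Handling a walk confined to an orthant times a bounded ball, with these extra constraints, is precisely the content of the paper's Theorem~\ref{prop:centered_case_extended}, proved via Proposition~\ref{prop:n_steps_push_forward} (pushing the walk deep into the cone while keeping the $J$-coordinates bounded) and Proposition~\ref{prop:path_staying_in_a_ball} (the rate of staying in a ball of radius $R$ tends to $1$ only as $R\to\infty$); it is not a consequence of \eqref{old_thm} or of the classical Cram\'er machinery. This is also the true reason the conclusion is a limit as $d(x)\to\infty$: $d(x)$ plays the role of the ball radius $R$, not merely the cost of a bridge or the admissibility of the starting point. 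Without this ingredient your argument cannot close; with it, the rest of your outline is correct.
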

Let us illustrate this theorem with some basic examples.

\begin{example}
If $\mu$ satisfies assumption (H), then $V=\RR^d$, $I=\inter{1}{d}$, $V^+=Q$, $I^\perp=\emptyset$, and $d(x)=\infty$. Thus we recover the non-asymptotic theorem of~\cite{GaRa13}: There exists $\delta\geq 0$ such that
$$\rho_x=\inf_{Q}L_{\mu},$$
for all $x\in Q_{\delta}$.
\end{example}

\begin{example}
Consider the $2$-dimensional lattice distribution $\mu$ defined by
$$\mu(-1,0)=\alpha,\quad\mu(0,1)=\beta,\quad\mu(0,-1)=\gamma,\quad \alpha+\beta+\gamma=1,\quad \alpha,\beta,\gamma>0.$$
The associated random walk is badly oriented since $\mu(e_1^-)=1$. Now, $(e_1)$ is maximal since $\beta,\gamma>0$, therefore
$V=e_1^\perp$, $I=\{2\}$, $V^+=\{(0,j): j\geq 0\}$,  $I^\perp=\{1\}$ and $d(x)=\infty$, meaning that 
$$\rho_x=\inf_{Q}L_{\mu},$$
for all $x\in Q_{\delta}$, for some constant $\delta\geq 0$. Let us compute the value of $\rho_x$. The
Laplace transform of $\mu$ is given by
$$L_{\mu}(i,j)=\alpha e^{-i}+\beta e^j+\gamma e^{-j}.$$
Minimizing first on $i\geq 0$ leads to the relation
$$\inf_{Q}L_{\mu}=\inf_{j\geq 0}(\beta e^j+\gamma e^{-j})=(\beta+\gamma)\inf_{j\geq 0}(\beta' e^j+\gamma' e^{-j}),$$
where $\beta'=\beta/(\beta+\gamma)$ and $\gamma'=\gamma/(\beta+\gamma)$ sum up to $1$. 
Notice that the above formula corresponds exactly to equality \eqref{eq:inf_decomposition_2} with $V=e_1^\perp$.
An easy computation shows that
$$\inf_{j\geq 0}(\beta' e^j+\gamma' e^{-j})=
\begin{cases}
2\sqrt{\beta'\gamma'} &\mbox{ if } \gamma'\geq \beta'\\
1 &\mbox{ else.}
\end{cases}
$$
Hence 
$$
\rho_x=\inf_Q L_{\mu}=
\begin{cases}
2\sqrt{\beta\gamma} &\mbox{ if } \gamma\geq \beta\\
\beta+\gamma &\mbox{ else.}
\end{cases}
$$
\end{example}

\begin{example}
Consider the $2$-dimensional lattice distribution $\mu$ defined by
$$\mu(-1,-1)=\alpha,\quad\mu(-1,1)=\beta,\quad\mu(1,-1)=\gamma,\quad \alpha+\beta+\gamma=1,\quad \alpha,\beta,\gamma>0.$$
The associated random walk is badly oriented since the support of $\mu$ is included in $(1,1)^-$. Here,
 $V=(1,1)^\perp$, $I=I^\perp=\emptyset$, $V^+=V$ and $d(x)=\min\{x_1,x_2\}$.
Therefore, we obtain
$$\lim_{x_1,x_2\to\infty}\rho_x=\inf_{Q}L_{\mu}=2\sqrt{\beta\gamma},$$
as the reader may check.
\end{example}

The rest of the paper is organized as follows: 
In Section~\ref{sec:proof_main_thm}, we present the proof of Theorem~\ref{thm:main_theorem}. The construction of the reduced support enables us to perform a Cram\'er's transformation at infinity, and then compare our initial exponential decreasing rate with that of a conditioned random walk whose distribution is more ``favorable''. The exponential rate of this conditioned random walk is
then analysed in Section~\ref{sec:favorable_case}. The Appendix at the end of the paper provides some material on polyhedra that is needed in a technical lemma and for which no reference were found.

\section{Proof of Theorem~\ref{thm:main_theorem}}
\label{sec:proof_main_thm}

The first subsection recalls the strategy used in \cite{GaRa13} in order to obtain the exponential decreasing rate in the case where the random walk is well oriented.
Though not necessary, its reading is recommended in order to become acquainted with the basic ideas of the proof of Theorem~\ref{thm:main_theorem}. We will then present the construction and the main properties of the reduced support, and finally use it to perform our Cram\'er's transformation at infinity in order to conclude the proof of Theorem~\ref{thm:main_theorem}.

\subsection{Sketch of proof for well oriented random walks}

The reader may know, or at least should let his intuition convince himself that $\rho_x=1$ when the drift vector $m=\EE(\mu)$ belongs to the orthant $Q$. In other cases, the basic idea is to carry out an exponential change of measure in order to transform the drift vector $m$ into a new one, $m_0$, that belongs to $Q$. To do this, fix any $x_0\in \RR^d$ and define
$$\mu_0(dy)=\frac{e^{\sclr{x_0}{y}}}{L_{\mu}(x_0)}\mu(dy).$$
Clearly, $\mu_0$ is a probability distribution with Laplace transform
$$L_{\mu_0}(z)=\frac{L_{\mu}(z+x_0)}{L_{\mu}(x_0)}.$$
Furthermore, it is well-known that the expectation of a probability distribution is equal to the gradient of its Laplace transform evaluated at $0$.
Thus
$$m_0=\EE(\mu_0)=\nabla L_{\mu}(x_0)/L_{\mu}(x_0).$$
If $L_{\mu}$ reaches a global minimum on $Q$ at $x_0$, then its partial derivatives are non-negative at $x_0$ and, therefore,
the drift $m_0$ belongs to $Q$. So, the distribution $\mu_0$ is ``nice'' in the sense that the value of the exponential rate for a random walk with that distribution is known.
The link between the distribution of our canonical random walk under $\PP_{\mu}^x$ and $\PP_{\mu_0}^x$ is expressed via Cram\'er's formula (see \cite[Lemma 6]{GaRa13} for example) which leads in our case to the relation
\begin{equation}
\label{eq:cramer_0}
\PP_{\mu}^x(\tau_Q>n)=L_{\mu}(x_0)^ne^{\sclr{x_0}{x}}\EE_{\mu_0}^x\left(e^{-\sclr{x_0}{S_n}},\tau_Q>n\right).
\end{equation}
From this, we already notice that for all $x_0\in Q$ (being a minimum point or not),
$$\limsup_{n\to\infty}\PP_{\mu}^x(\tau_Q>n)^{1/n}\leq L_{\mu}(x_0),$$
since $\sclr{x_0}{S_n}\geq 0$ as soon as $S_n\in Q$. Thus, the upper bound
\begin{equation}
\label{eq:universal_upper_bound}
\limsup_{n\to\infty}\PP_{\mu}^x(\tau_Q>n)^{1/n}\leq \inf_Q L_{\mu}
\end{equation}
holds for any probability distribution $\mu$ (with all exponential moments).

In order to obtain the corresponding lower bound
in the case where $L_{\mu}$ reaches a global minimum on $Q$ at $x_0$, we use formula \eqref{eq:cramer_0} with our nice distribution $\mu_0$. What is not so nice here is the
$\exp(-\sclr{x_0}{S_n})$ term, which could become very small since $(S_n)$ has a drift $m_0\in Q$. But, in fact, the growth of $\sclr{x_0}{S_n}$ can be controled thanks to the following observation: Let $K$ be the set of indices $i$ such that $x_0^{(i)}>0$. Since $x_0$ belongs to $Q$, the other coordinates are equal to zero, and
$$
\sclr{x_0}{S_n}=\sum_{i\in K}x_0^{(i)}S_n^{(i)}.
$$
Furthermore, if $x_0^{(i)}>0$ for some index $i$, then $0$ is a local minimum of the partial function $t\in [-x_0^{(i)}, +\infty)\mapsto L_{\mu}(x_0+te_i)$ and therefore $m_0^{(i)}=0$, because it is proportional to $\partial_{x_i} L_{\mu}(x_0)=0$. So, the coordinates of the random walk that we need to control have zero mean. Since $\sqrt{n}$ is a natural bound for the norm of a square integrable centered random walk, one can expect that adding the constraint 
\begin{equation}
\label{eq:constraint}
\max_{i\in K}\vert S^{(i)}_n\vert\leq\sqrt{n}
\end{equation}
will not decrease too much the probability on the right-hand side of equation \eqref{eq:cramer_0}.
As proved in~\cite{GaRa13} this happens to be true: For any distribution $\mu$ satisfying assumption (H),
$$
\liminf_{n\to\infty}\PP_{\mu_0}^x\left(\max_{i\in K}\vert S^{(i)}_n\vert\leq\sqrt{n},\tau_Q>n\right)^{1/n}=1.
$$
Under the constraint \eqref{eq:constraint}, the exponential term  $\exp(-\sclr{x_0}{S_n})$ in~\eqref{eq:cramer_0} is bounded from below by $\exp(-\Vert x_0\Vert\sqrt{n})$, a term that disappears in the $n$th root limit.
Thus, we obtain the lower bound
$$
\rho_x \geq L_{\mu}(x_0)\liminf_{n\to\infty}\PP_{\mu_0}^x\left(\max_{i\in K}\vert S^{(i)}_n\vert\leq\sqrt{n},\tau_Q>n\right)^{1/n}=L_{\mu}(x_0),
$$
and this concludes the analysis.

\subsection{Construction and properties of the reduced support}

In this section, we prove the uniqueness of the reduced support of $\mu$ (Definitions \ref{def:admissible_maximal} and \ref{def:reduced_support}) and establish some of its properties.  When $\mu$ satisfy assumption (H), the reduced support $V$ is equal to $\RR^d$ -- by definition -- and all properties listed below are trivial, as the reader may check. Therefore, in what follows, we assume the existence of a direction $u_1\in Q\cap\SS^{d-1}$ such that $\mu(u_1^-)=1$.
Thus, there exists at least an admissible tuple, namely $(u_1)$, and the existence of a maximal tuple follows immediately.

First, we prove the uniqueness\footnote{This property will not be used, but it is a very natural question since the conclusion of Theorem~\ref{thm:main_theorem} depends on this subspace.} of the linear space generated by maximal admissible tuples.

\begin{lemma}
\label{lem:reduced_support_uniqueness}
Any two maximal admissible tuples generate the same linear space.
\end{lemma}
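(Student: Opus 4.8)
The plan is to show that if $(u_1,\ldots,u_r)$ and $(v_1,\ldots,v_s)$ are two maximal admissible tuples, then the linear spans $U=[u_1,\ldots,u_r]$ and $W=[v_1,\ldots,v_s]$ coincide; by symmetry it suffices to prove $W\subseteq U$, and since the $v_j$ span $W$ it suffices to prove each $v_j\in U$. I will argue by contradiction: suppose some $v_j\notin U$, and let $k$ be the smallest index with $v_k\notin U$, so that $v_1,\ldots,v_{k-1}\in U$. The key structural observation is that the defining relations in Definition~\ref{def:admissible_maximal} say exactly that $\mu$ is concentrated, step by step, on the nested subspaces $u_1^\perp$, $u_1^\perp\cap u_2^\perp$, \ldots; writing $E_r=u_1^\perp\cap\cdots\cap u_r^\perp=U^\perp$, the relations amount to $\mu(E_r)=1$ together with the requirement that $u_{i+1}\in Q$ and $\mu(u_{i+1}^-\cap E_i)=\mu(E_i)$, i.e.\ the support of $\mu$ restricted to $E_i$ lies in the half-space $u_{i+1}^-$. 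A clean reformulation I would establish first (as a preliminary claim) is: a tuple $(w_1,\ldots,w_m)\in Q^m$ of linearly independent vectors is admissible if and only if $\mu(w_1^\perp\cap\cdots\cap w_m^\perp)=\mu(\{y: \sclr{w_i}{y}=0,\ i=1\ldots m\})$ equals $1$... more precisely, if and only if for each $i$, the restriction $\mu|_{E_{i-1}}$ is supported in $w_i^-$. This makes ``admissibility'' a statement purely about the span together with an ordering compatible with the half-space conditions.

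Next I would use maximality of $(u_1,\ldots,u_r)$ to derive the contradiction. Having fixed the minimal $k$ with $v_k\notin U$, consider the vector $v_k$. I claim $(u_1,\ldots,u_r,v_k)$ is admissible, which contradicts maximality. Linear independence is immediate since $v_k\notin U=[u_1,\ldots,u_r]$. For the half-space relation, I must show $\mu\big(v_k^-\cap E_r\big)=\mu(E_r)$, i.e.\ that $\mu$ restricted to $E_r=U^\perp$ is supported in $v_k^-$. Here is where the minimality of $k$ enters: since $v_1,\ldots,v_{k-1}\in U$, we have $U^\perp\subseteq v_1^\perp\cap\cdots\cap v_{k-1}^\perp$, and on the other hand, admissibility of the $v$-tuple tells us (via the preliminary claim, applied at stage $k$) that $\mu$ restricted to $v_1^\perp\cap\cdots\cap v_{k-1}^\perp$ is supported in $v_k^-$. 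Wait — I need to check the compatibility of the nested-subspace descriptions for the two tuples on the overlap; the point is that $\mu(U^\perp)=\mu(E_r)$ and $\mu|_{E_r}$ is a sub-measure of $\mu$ restricted to $v_1^\perp\cap\cdots\cap v_{k-1}^\perp$, so the half-space condition for $v_k$ is inherited. Since $v_k\in Q$ as well, the extended tuple $(u_1,\ldots,u_r,v_k)$ satisfies (i) and (ii), contradicting maximality.

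The main obstacle I anticipate is bookkeeping the two chains of nested subspaces and making sure the ``restriction of $\mu$'' language is handled correctly — in particular, the subtlety that $\mu$ need not be supported in $v_1^\perp\cap\cdots\cap v_{k-1}^\perp$ as a whole, only that the successive conditional measures live there; so the claim ``$\mu|_{E_r}$ is supported in $v_k^-$'' must be deduced from $\mu|_{v_1^\perp\cap\cdots\cap v_{k-1}^\perp}$ being supported in $v_k^-$ together with $E_r\subseteq v_1^\perp\cap\cdots\cap v_{k-1}^\perp$ and $\mu(E_r)>0$ (or handling $\mu(E_r)=0$ trivially, though under our standing hypothesis $\inf_Q L_\mu>0$ one has $\mu(V)>0$, hence $\mu(U^\perp)=\mu(V)>0$). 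Once the preliminary reformulation of admissibility is in place, the contradiction argument is short; I would therefore spend the bulk of the write-up carefully stating and proving that reformulation, and then the uniqueness lemma follows in a few lines by the minimal-counterexample argument above, with the roles of the two tuples symmetric.
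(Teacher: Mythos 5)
Your argument is correct and is essentially the paper's own proof: both hinge on the observation that once $v_1,\ldots,v_{k-1}\in[u_1,\ldots,u_r]$, one has $[u_1,\ldots,u_r]^\perp\subseteq v_1^\perp\cap\cdots\cap v_{k-1}^\perp$, so condition (ii) of admissibility for $v_k$ is inherited by the extended tuple $(u_1,\ldots,u_r,v_k)$ and only linear independence can fail, forcing $v_k\in[u_1,\ldots,u_r]$ -- you organize this as a minimal counterexample where the paper runs a direct induction, which is the same argument. Two minor remarks: the clause ``the relations amount to $\mu(E_r)=1$'' in your preliminary reformulation is false (admissibility does not require $\mu(U^\perp)=1$; in the paper's first example $\mu(u_1^\perp)=2q<1$), but your contradiction argument never uses it, and your worry about needing $\mu(E_r)>0$ is also unnecessary, since $E_r\setminus v_k^-$ is contained in the $\mu$-null set $(v_1^\perp\cap\cdots\cap v_{k-1}^\perp)\setminus v_k^-$ and is therefore $\mu$-null whatever the value of $\mu(E_r)$.
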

\begin{proof}
Let $(u_1,u_2,\ldots,u_r)$ and $(v_1,v_2,\ldots,v_s)$ be two maximal admissible tuples and let
$$A=[u_1,u_2,\ldots,u_r]\quad\mbox{ and }\quad B=[v_1,v_2,\ldots,v_s]$$
denote the linear spaces generated by this two tuples, respectively.
Since $(u_1,u_2,\ldots,u_r)$ is maximal, $(u_1,u_2,\ldots,u_r, v_1)$ is not admissible. On the other hand, $\mu(v_1^-)=1$ so that
$$\mu(v_1^-\cap A^\perp)=\mu(A^\perp),$$
and the tuple $(u_1,u_2,\ldots,u_r, v_1)$ satisfies condition (ii) of Definition~\ref{def:admissible_maximal}. Thus, the reason why this tuple fails to be admissible is that condition (i) is not satisfied, i.e.~$v_1$ belongs to $A$. Now, suppose that $v_1,v_2,\ldots, v_{k-1}$ all belong to $A$ for some $k\in\inter{2}{s}$, and denote
$B_k=[v_1,v_2,\ldots, v_{k-1}]$ the linear space they generate. By hypothesis, we have
$$A^\perp\subset B_k^\perp\quad\mbox{ and }\quad \mu(v_k^-\cap B_k^{\perp})=\mu(B_k^\perp).$$
This implies $\mu(v_k^-\cap A^{\perp})=\mu(A^\perp)$ and therefore $v_k$ must belong to $A$, otherwise $(u_1,u_2,\ldots,u_r, v_k)$ would be admissible.
By induction, we obtain the inclusion $B\subset A$, and the equality $A=B$ follows by interchanging the role of $A$ and $B$.
\end{proof}

Thanks to Lemma~\ref{lem:reduced_support_uniqueness} we can define the {\em reduced support} $V$ of $\mu$ as the orthogonal complement of {\em the}
linear space generated by any maximal admissible tuple (Definition~\ref{def:reduced_support}). In what follows, we fix a maximal admissible tuple
$(u_1,u_2,\ldots, u_r)$ and set 
\begin{equation*}
V=[u_1,u_2,\ldots,u_r]^\perp\quad\mbox{ and }\quad V^+=V\cap\left(\bigcup_{h\in V^\perp}Q-h\right).
\end{equation*}
The reduced support $V$ and the cone $V^+$ play a fundamental role in our analysis. First of all, they provide a useful expression for the infimum of the Laplace transform.

\begin{lemma}
\label{lem:minimum_reduction}
The following equality holds:
\begin{equation*}
\inf_Q L_{\mu}=\inf_{v\in V^+}\int_{V}e^{\sclr{v}{y}}\mu(dy).
\end{equation*}
\end{lemma}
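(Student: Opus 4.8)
The plan is to prove the two inequalities separately, exploiting the defining relations of an admissible maximal tuple $(u_1,\dots,u_r)$ and the ``boundary behaviour'' of the Laplace transform recalled in the excerpt, namely that $L_\mu(z+tu)\to\int_{u^\perp}e^{\langle z,y\rangle}\mu(dy)$ when $\mu(u^-)=1$.

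First I would establish the inequality $\inf_Q L_\mu \le \inf_{v\in V^+}\int_V e^{\langle v,y\rangle}\mu(dy)$. Fix $v\in V^+$. By definition of $V^+$ there exists $h\in V^\perp=[u_1,\dots,u_r]$ with $v+h\in Q$. Write $h=\sum_{k=1}^r c_k u_k$. The key observation is that, because of the chain of conditions (ii) in Definition~\ref{def:admissible_maximal}, for each $k$ the set $u_k^\perp\cap\dots\cap u_1^\perp$ carries full $\mu$-mass of $u_{k-1}^\perp\cap\dots\cap u_1^\perp$; iterating, $\mu(V)=\mu(u_r^\perp\cap\dots\cap u_1^\perp)$ and, more importantly, for $y$ in the support of $\mu$ we have $\langle u_k,y\rangle\le 0$ ``eventually along the filtration'', so that $\langle h+tu_k\text{-type shifts},y\rangle$ stays controlled. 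Concretely I would consider the point $z_t=v+h+t(u_1+\dots+u_r)\in Q$ for $t>0$ large (it lies in $Q$ since $v+h\in Q$, $u_k\in Q$), and show by the nesting relations and the displayed limit formula, applied successively to $u_1,\dots,u_r$, that $\limsup_{t\to\infty}L_\mu(z_t)\le\int_V e^{\langle v,y\rangle}\mu(dy)$; the contributions of the steps outside $u_k^\perp$ vanish in the limit because they have zero $\mu$-mass conditionally, and on $V$ the added terms $t u_k$ contribute nothing since $u_k\perp V$. Since each $z_t\in Q$, this gives $\inf_Q L_\mu\le\int_V e^{\langle v,y\rangle}\mu(dy)$, and taking the infimum over $v\in V^+$ yields the first inequality.

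Next I would prove the reverse inequality $\inf_Q L_\mu \ge \inf_{v\in V^+}\int_V e^{\langle v,y\rangle}\mu(dy)$. Take any $z\in Q$. I want to produce $v\in V^+$ with $\int_V e^{\langle v,y\rangle}\mu(dy)\le L_\mu(z)$. The natural candidate is the orthogonal projection $v=\pi_V(z)$ of $z$ onto $V$, so that $z=v+h$ with $h=\pi_{V^\perp}(z)\in V^\perp$; since $z\in Q$ and $z=v+h$, we have $v=z-h\in Q-h\subset\bigcup_{h'\in V^\perp}Q-h'$, hence $v\in V^+$. It remains to bound $\int_V e^{\langle v,y\rangle}\mu(dy)$ by $L_\mu(z)=\int_{\RR^d}e^{\langle z,y\rangle}\mu(dy)$. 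For $y\in V$ we have $\langle z,y\rangle=\langle v,y\rangle$ (because $h\perp V\ni y$), so $\int_V e^{\langle v,y\rangle}\mu(dy)=\int_V e^{\langle z,y\rangle}\mu(dy)\le\int_{\RR^d}e^{\langle z,y\rangle}\mu(dy)=L_\mu(z)$, the inequality being trivial since the integrand is nonnegative. Taking the infimum over $z\in Q$ gives $\inf_Q L_\mu\ge\inf_{V^+}\int_V e^{\langle\cdot,y\rangle}\mu(dy)$, and combined with the first part this proves the lemma.

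The main obstacle is the first inequality: one must check carefully that sending $z_t=v+h+t\sum u_k$ to infinity along the cone direction $u_1+\dots+u_r$ really does drive $L_\mu(z_t)$ down to $\int_V e^{\langle v,y\rangle}\mu(dy)$, and this requires unwinding the nested full-mass conditions of Definition~\ref{def:admissible_maximal} in the right order — first using $\mu(u_1^-)=1$ to kill the direction $u_1$, then working inside $u_1^\perp$ where $\mu(u_2^-\cap u_1^\perp)=\mu(u_1^\perp)$ to kill $u_2$, and so on — while keeping track that each truncation only loses a $\mu$-null set and that the residual mass genuinely sits on $V=\bigcap u_k^\perp$. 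A clean way to organize this is by induction on $r$: condition on $u_1^\perp$ (legitimate since $\mu(u_1^-)=1$ forces the relevant mass onto $u_1^\perp$ in the limit by the displayed formula), apply the inductive hypothesis to the conditioned measure and the tuple $(u_2,\dots,u_r)$ inside the hyperplane $u_1^\perp$, and recombine. One should also double-check the measure-theoretic point that $\mu(u^-)=1$ together with $L_\mu$ finite everywhere indeed gives the stated limit with the $\mu(u^\perp)$-integral (this is quoted from \cite[Lemma 3]{GaRa13}, so it may be cited directly). The second inequality, by contrast, is essentially a one-line projection-plus-positivity argument and should pose no difficulty.
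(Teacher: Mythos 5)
Your second inequality is fine and coincides with the paper's argument (project $z\in Q$ onto $V$, note the projection lies in $V^+$, and use positivity of the integrand). The gap is in the first inequality, and it is a real one: the single-parameter limit along $z_t=v+h+t(u_1+\cdots+u_r)$ does not in general drive $L_\mu(z_t)$ down to $\int_V e^{\sclr{v}{y}}\mu(dy)$; it can drive it to $+\infty$. The admissibility conditions only control the sign of $\sclr{u_k}{y}$ for $y$ in the nested set $u_1^\perp\cap\cdots\cap u_{k-1}^\perp$; off that set, $\sclr{u_k}{y}$ may be large and positive, so $\sclr{u_1+\cdots+u_r}{y}$ need not be $\leq 0$ on the support of $\mu$. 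Concretely, in $\RR^3$ take $u_1=e_1$, $u_2=e_2$ and let $\mu$ charge the points $(-1,10,0)$ and $(0,0,\pm 1)$ (plus whatever is needed to make $(e_1,e_2)$ maximal, e.g.\ $(0,-1,\pm1)$). Then $\mu(e_1^-)=1$ and $\mu(e_2^-\cap e_1^\perp)=\mu(e_1^\perp)$, but $\sclr{u_1+u_2}{(-1,10,0)}=9>0$, so $L_\mu(z_t)\to\infty$. Your parenthetical justification --- ``the contributions of the steps outside $u_k^\perp$ vanish in the limit because they have zero $\mu$-mass conditionally'' --- is exactly where this breaks: the set $\{\sclr{u_1}{y}<0\}$ typically has positive mass, and its contribution vanishes only because $e^{t\sclr{u_1}{y}}\to 0$ there, which can be overwhelmed by $e^{t\sclr{u_2}{y}}$ when both grow with the same parameter $t$.

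The paper avoids this by writing $h=\sum_k\lambda_k u_k$ and taking \emph{iterated} limits $\lambda_1\to\infty$ first (with $\lambda_2,\ldots,\lambda_r$ fixed, so dominated convergence kills the mass off $u_1^\perp$), then $\lambda_2\to\infty$ inside $u_1^\perp$, and so on; each step uses exactly one line of the admissibility chain, in order. This is in substance the induction on $r$ that you sketch at the end as an alternative organization, so the repair is available within your own outline --- but you must abandon the simultaneous limit $z_t$ and replace it with the ordered, one-direction-at-a-time limit (each intermediate point $v+\sum_k\lambda_k u_k$ still lies in $Q$ when the $\lambda_k$ are increased from their initial values, since each $u_k\in Q$, so the conclusion $\inf_Q L_\mu\leq\int_V e^{\sclr{v}{y}}\mu(dy)$ still follows).
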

\begin{proof} 
It follows from the orthogonal decomposition $\RR^d=V\oplus V^{\perp}$ that
\begin{align*}
\inf \{L_{\mu}(x) : x\in Q\} & =\inf\{L_{\mu}(v+h):(v,h)\in V\times V^\perp, v+h\in Q\} \\
                             & =\inf_{v\in V^+}\inf_{h\in V^\perp\cap (Q-v)}L_{\mu}(v+h).
\end{align*}
Therefore, we have to prove that for all $v\in V^+$
\begin{equation}
\label{inf_is_conditional_laplace}
\inf_{h\in V^\perp\cap (Q-v)}L_{\mu}(v+h)=\int_{V}e^{\sclr{v}{y}}\mu(dy).
\end{equation}
Let $v\in V^+$. First, for all $h\in V^{\perp}$, we notice that
\begin{equation*}
L_{\mu}(v+h)\geq \int_V e^{\sclr{v+h}{y}}\mu(dy)=\int_V e^{\sclr{v}{y}}\mu(dy),
\end{equation*}
since $\sclr{h}{y}=0$ for all $y\in V$. Now, pick $h\in V^\perp\cap (Q-v)$ (such a $h$ exists since $v\in V^+$), and let $\lambda_1,\lambda_2,\ldots,\lambda_r$ denote its coordinates with respect to the basis $(u_1,u_2,\ldots,u_r)$ of $V^{\perp}$. It is clear that $h=\sum_{k=1}^r\lambda_ku_k$ will still belong to $V^\perp\cap (Q-v)$ if one increases the value of any $\lambda_i$. Hence, the equality~\eqref{inf_is_conditional_laplace} will follow from
\begin{equation}
\label{reaching_conditional_laplace}
\lim_{\lambda_r\to\infty}\ldots\lim_{\lambda_2\to\infty}\lim_{\lambda_1\to\infty}L_{\mu}\left(v+\sum_{k=1}^r\lambda_r u_r\right)=\int_V e^{\sclr{v}{y}}\mu(dy).
\end{equation}
Since $(u_1,u_2,\ldots,u_r)$ is admissible, $\mu(u_1^-)=1$ and consequently
\begin{equation*}
L_{\mu}\left(v+\sum_{k=1}^r\lambda_r u_r\right)
=\int_{u_1^\perp}e^{\sclr{v}{y}}e^{\sum_{k=2}^r\lambda_k\sclr{u_k}{y}}\mu(dy)+\int_{\{\sclr{u_1}{y}<0\}}e^{\sclr{v}{y}}e^{\sum_{k=1}^r\lambda_k\sclr{u_k}{y}}\mu(dy).
\end{equation*}
By the dominated convergence theorem, the second integral on the right-hand side of the above equation goes to zero as $\lambda_1$ goes to infinity. Hence
\begin{equation*}
\lim_{\lambda_1\to\infty}L_{\mu}\left(v+\sum_{k=1}^r\lambda_r u_r\right)=\int_{u_1^\perp}e^{\sclr{v}{y}}e^{\sum_{k=2}^r\lambda_k\sclr{u_k}{y}}\mu(dy).
\end{equation*}
Now, by hypothesis, we have $\mu(u_2^-\cap u_1^\perp)=\mu(u_1^\perp)$, so that the same argument as above leads to
\begin{equation*}
\lim_{\lambda_2\to\infty}\lim_{\lambda_1\to\infty}L_{\mu}\left(v+\sum_{k=1}^r\lambda_r u_r\right)=\int_{u_1^\perp\cap u_2^\perp}e^{\sclr{v}{y}}e^{\sum_{k=3}^r\lambda_k\sclr{u_k}{y}}\mu(dy).
\end{equation*}
The equality in~\eqref{reaching_conditional_laplace} is obtained by using repeatedly the same argument.
\end{proof}

\begin{lemma}
\label{lem:existence_of_global_minimum}
Assume $\inf_Q L_{\mu}>0$. Then:
\begin{enumerate}
\item $\mu(V)>0$.
\item For all $v\in V^+\cap\SS^{d-1}$, $\mu(v^-\vert V)<1$.
\item The Laplace transform $L_{\mu\vert V}$ has a global minimum on~$V^+$.
\item $\inf_Q L_{\mu}=\mu(V)\min_{V^+}L_{\mu\vert V}$.
\end{enumerate}
\end{lemma}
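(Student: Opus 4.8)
The plan is to get (1) and (4) as quick corollaries of Lemma~\ref{lem:minimum_reduction}, and to spend the real effort on (2) and (3): part (2) expresses that the maximality of the fixed tuple $(u_1,\ldots,u_r)$ leaves no room for a ``new'' bad direction once one tests inside $V$, and part (3) is then a coercivity argument for $L_{\mu\vert V}$ on the cone $V^+$.

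For (1), I would argue by contraposition: if $\mu(V)=0$ then $\int_V e^{\sclr{v}{y}}\mu(dy)=0$ for every $v$, so Lemma~\ref{lem:minimum_reduction} forces $\inf_Q L_\mu=0$, against the standing assumption. Hence $\mu(V)>0$, and the conditional law $\mu\vert V$, together with its everywhere-finite convex Laplace transform $L_{\mu\vert V}(z)=\mu(V)^{-1}\int_V e^{\sclr{z}{y}}\mu(dy)$, are well defined.

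The heart of the matter is (2). I would suppose, for a contradiction, that $\mu(v^-\vert V)=1$ for some $v\in V^+\cap\SS^{d-1}$. Since $v\in V^+$, there is $h\in V^\perp$ with $u:=v+h\in Q$. I claim that $(u_1,\ldots,u_r,u)$ is admissible, which contradicts maximality. Linear independence holds because the orthogonal projection of $u$ onto $V$ is $v\neq 0$, so $u\notin[u_1,\ldots,u_r]$. For condition~(ii), the relations among $u_1,\ldots,u_r$ are inherited, and since $\sclr{u}{y}=\sclr{v}{y}$ for every $y\in V$ we have $u^-\cap V=v^-\cap V$, so the new relation reads $\mu(u^-\cap V)=\mu(v^-\cap V)=\mu(V)$, which is exactly the hypothesis. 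Together with $u\in Q$ and $u_1,\ldots,u_r\in Q$, this exhibits an admissible $(r+1)$-tuple, the desired contradiction; hence $\mu(v^-\vert V)<1$ on $V^+\cap\SS^{d-1}$.

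For (3), I would use that $f:=L_{\mu\vert V}$ is finite, convex, hence continuous on $V$, and that $V^+$ is a polyhedral, hence closed, convex cone, so $C:=V^+\cap\SS^{d-1}$ is compact. By (2), $\mu\vert V(w^-)<1$ for every $w\in C$, so the limit formula of \cite[Lemma~3]{GaRa13}, applied to $\mu\vert V$ (which also has all exponential moments), yields $f(tw)\to+\infty$ as $t\to+\infty$ for each $w\in C$. A routine compactness-plus-convexity argument then promotes this to coercivity of $f$ on $V^+$: were there $x_n\in V^+$ with $\Vert x_n\Vert\to\infty$ and $f(x_n)\leq M$, one passes to a subsequence with $x_n/\Vert x_n\Vert\to w\in C$, notes that $t\,x_n/\Vert x_n\Vert$ is a convex combination of $0$ and $x_n$ for $n$ large, so $f(t\,x_n/\Vert x_n\Vert)\leq\max(f(0),M)$, and lets $n\to\infty$ to get $f(tw)\leq\max(f(0),M)$ for all $t$, contradicting $f(tw)\to+\infty$. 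Thus $f$ attains its minimum on the closed cone $V^+$. Finally, (4) follows by combining Lemma~\ref{lem:minimum_reduction}, the identity $\int_V e^{\sclr{v}{y}}\mu(dy)=\mu(V)L_{\mu\vert V}(v)$, and the attainment in (3). The one genuinely delicate point is the construction in (2): one must realise that a hypothetical bad direction $v\in V^+$ — which need not itself lie in $Q$ — can be ``straightened'' into a bona fide admissible direction $u\in Q$ by adding a vector of $V^\perp$, an operation that leaves the relevant half-space unchanged when tested against $V$; everything else is convexity and bookkeeping with the admissibility relations.
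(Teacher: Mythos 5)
Your proposal is correct and follows essentially the same route as the paper: part (1) by contraposition from Lemma~\ref{lem:minimum_reduction}, part (2) by straightening $v$ into $u=v+h\in Q$, observing $u^-\cap V=v^-\cap V$, and invoking maximality of the tuple, and part (4) as a reformulation. The only cosmetic difference is in part (3), where the paper simply cites \cite[Lemma~4]{GaRa13} for the closedness/coercivity step while you spell out the compactness-plus-convexity argument explicitly; both are fine.
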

\begin{proof} 
The formula in Lemma \ref{lem:minimum_reduction} shows that $\inf_Q L_{\mu}=0$ as soon as $\mu(V)=0$. The first item follows by contraposition.

Let $v\in V^+\cap\SS^{d-1}$. By definition of $V^+$ there exists $h\in V^\perp$ such that $u=v+h\in Q$.
For any $y\in V$, we have $\sclr{v}{y}=\sclr{u}{y}$ and consequently
$$
v^-\cap V=u^-\cap V.
$$
So, it suffices to show that $\mu(u^-\vert V)<1$ and the second assertion of the lemma will follow.
By definition, $u=v+h$ (with $v\in V$, $v\not=0$ and $h\in V^{\perp}$) is linearly independent of $V^\perp=[u_1,u_2,\ldots,u_r]$ and
belongs to $Q$. But by maximality, the tuple $(u_1,u_2,\ldots,u_r,u)$ is not admissible.
Thus, we must have $\mu(u^-\cap V)<\mu(V)$. This proves the second assertion of the lemma.

The third assertion follows from the second one since $V^+$ is a closed cone (see \cite[Lemma 4]{GaRa13} -- note that the hypothesis (H1) is not used for the part of the lemma that we need here).

Finally, the last item is just a reformulation of the formula in Lemma \ref{lem:minimum_reduction}.
\end{proof}

We shall now give a very simple description of the cone $V^+$ associated with the reduced support $V$.
To this end, we define
$$I=\{i\in\inter{1}{d} : e_i\in V\}.$$
\begin{lemma}
\label{lem:description_of_V_plus}
The cone $V^+$ has the following expression: 
$$V^+=\{x\in V : \sclr{x}{e_i}\geq 0, \forall i\in I\}.$$ 
\end{lemma}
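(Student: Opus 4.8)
The plan is to prove the two inclusions separately, using the defining formula $V^+ = V \cap \left(\bigcup_{h\in V^\perp} Q - h\right)$ together with the two special features of the orthant $Q$: it is generated by the coordinate axes, and membership $x \in Q$ is tested coordinatewise.

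\textbf{The easy inclusion.} First I would show $V^+ \subseteq \{x\in V : \sclr{x}{e_i}\geq 0,\ \forall i\in I\}$. Take $x\in V^+$, so $x\in V$ and there exists $h\in V^\perp$ with $x + h \in Q$, i.e.\ $\sclr{x+h}{e_i}\geq 0$ for every $i\in\inter{1}{d}$. For an index $i\in I$ we have $e_i\in V$ by definition of $I$, and since $h\in V^\perp$ this gives $\sclr{h}{e_i}=0$; hence $\sclr{x}{e_i} = \sclr{x+h}{e_i}\geq 0$. This is immediate.

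\textbf{The harder inclusion.} For the reverse inclusion I need: given $x\in V$ with $\sclr{x}{e_i}\geq 0$ for all $i\in I$, produce $h\in V^\perp$ such that $x+h\in Q$. The idea is to choose $h$ to kill, or overwhelm, the coordinates indexed by $i\notin I$. Here is where the structure of the reduced support must be exploited. The key observation is that for $i \notin I$, the coordinate vector $e_i$ is \emph{not} in $V$, so its orthogonal projection $p = \mathrm{proj}_{V^\perp}(e_i)$ onto $V^\perp$ is nonzero. I would like to argue that one can write $x + h$ with all coordinates $\geq 0$ by adding a suitable nonnegative combination of the $u_k$'s. A clean way: recall from the proof of Lemma~\ref{lem:minimum_reduction} that if $h = \sum_{k=1}^r \lambda_k u_k$ lies in $V^\perp \cap (Q - v)$ then increasing any $\lambda_k$ keeps it there (since the $u_k\in Q$); so the question is whether $V^\perp \cap (Q - x)$ is nonempty, equivalently whether $(x + V^\perp) \cap Q \neq \emptyset$. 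Writing $x = \sum_i x_i e_i$, we have $x + h \in Q$ iff $x_i + \sclr{h}{e_i} \geq 0$ for all $i$. For $i\in I$ this holds already with any $h$ having $\sclr{h}{e_i}$ small (and in fact $=0$ is not forced, but we can keep it controlled). For $i\in I^\perp$ (i.e.\ $e_i\in V^\perp$) we have $x_i = \sclr{x}{e_i}=0$ since $x\in V$, and we need $\sclr{h}{e_i}\geq 0$. For the remaining $i\notin I\cup I^\perp$, $x_i$ may be negative and we must choose $h$ to compensate.

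\textbf{Main obstacle.} The crux is a positivity/LP feasibility statement: the system $\sum_{k=1}^r \lambda_k \sclr{u_k}{e_i} \geq -x_i$ for $i\notin I$ (together with $\sclr{u_k}{e_i}=0$ automatically for $i\in I$ since $u_k\in V^\perp$ and $e_i\in V$) must be solvable in $\lambda\in\RR^r$. I expect this to follow from a Farkas/duality argument combined with maximality of the tuple $(u_1,\dots,u_r)$, or more directly from the observation that $\sum_k u_k$ has \emph{strictly positive} $i$-th coordinate for every $i\notin I\cup I^\perp$ — this is the content I would need to extract from maximality (if some $i\notin I\cup I^\perp$ had $\sclr{u_k}{e_i}=0$ for all $k$, one should be able to derive a contradiction with maximality of the tuple, perhaps via the Appendix material on polyhedra referenced in the introduction). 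Granting that $\sclr{\sum_k u_k}{e_i} > 0$ for all $i\notin I\cup I^\perp$ and $\geq 0$ for $i\in I^\perp$, one simply takes $h = t\sum_k u_k$ for $t>0$ large enough to make $x_i + t\sclr{\sum_k u_k}{e_i}\geq 0$ for the finitely many offending coordinates, while the coordinates in $I$ are untouched because $e_i\perp V^\perp$ there; this $h\in V^\perp$ witnesses $x\in V^+$. So the real work is the lemma that $\sum_k u_k$ is ``coordinatewise supported'' on $\inter{1}{d}\setminus I$ with strict positivity off $I\cup I^\perp$, and I would isolate and prove that as the heart of the argument, the rest being the routine scaling above.
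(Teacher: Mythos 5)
Your overall structure is exactly the paper's: split the condition $x+h\in Q$ coordinatewise into the indices $i\in I$ (where $h$ is invisible because $e_i\in V\perp h$) and $i\notin I$ (where one takes $h$ to be a large positive multiple of $u_0=\sum_{k=1}^r u_k$), and you correctly isolate the single claim that carries all the weight, namely the strict positivity of $\sclr{u_0}{e_i}$ off $I$. The gap is that you leave this claim unproved and point at the wrong tools for it: neither maximality of the tuple, nor a Farkas/duality argument, nor the Appendix on polyhedra is involved. The claim follows in one line from two definitions you already used elsewhere in your sketch. An admissible tuple lies in $Q^r$ by Definition~\ref{def:admissible_maximal}, so every $\sclr{u_k}{e_i}$ is nonnegative; hence $\sclr{u_0}{e_i}=\sum_{k=1}^r\sclr{u_k}{e_i}\geq 0$, with equality if and only if $\sclr{u_k}{e_i}=0$ for every $k$, i.e.\ $e_i\in[u_1,\ldots,u_r]^\perp=V$, i.e.\ $i\in I$. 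Thus $\sclr{u_0}{e_i}>0$ for \emph{every} $i\notin I$ (in particular also for $i\in I^\perp$, though there the weaker bound $\geq 0$ already suffices since $x_i=0$ for $x\in V$). With this observation inserted, your scaling step $h=t u_0$ with $t>0$ large closes the argument, and your proof coincides with the one in the paper.
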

\begin{proof} We first note that $x\in \bigcup_{h\in V^\perp}Q-h$ if and only if there exists $h\in V^{\perp}$ such that $\sclr{x+h}{e_i}\geq 0$ for all $i$. 
But, for $i\in I$, $\sclr{x+h}{e_i}=\sclr{x}{e_i}$ since $e_i\in V$ and $h\in V^{\perp}$. Hence, the condition splits into:
\begin{enumerate}
\item $\sclr{x}{e_i}\geq 0$ for all $i\in I$, and
\item\label{cond_v_plus} there exists $h\in V^\perp$ such that $\sclr{x+h}{e_i}\geq 0$ for all $i\notin I$.
\end{enumerate}
Therefore, it remains to prove that the last condition holds for all $x$. To this end,
recall that $V^\perp=[u_1,u_2,\ldots, u_r]$ with $u_k\in Q$, and set $u_0=\sum_{k=1}^ru_k$.
Since $\sclr{u_0}{e_i}=\sum_{k=1}^r\sclr{u_k}{e_i}$ and $\sclr{u_k}{e_i}\geq 0$ for all $k$, we see that
$\sclr{u_0}{e_i}\geq 0$
and equality occurs if and only if $\sclr{u_k}{e_i}=0$ for all $k$, which means exactly that $e_i\in [u_1,u_2,\ldots,u_r]^{\perp}=V$. Thus, by definition of the set $I$, we have
$\sclr{u_0}{e_i}>0$ for all $i\notin I$. For any fixed $x$, this property allows to find $\lambda>0$ such that $\sclr{x}{e_i}+\lambda\sclr{u_0}{e_i}\geq 0$ for all $i\notin I$,  so that condition~\eqref{cond_v_plus} holds with $h=\lambda u_0$. This proves the lemma.
\end{proof}

\subsection{Comparison with the conditioned random walk with increments restricted to the reduced support}

As mentioned earlier (see \eqref{eq:universal_upper_bound}), the infimum of $L_{\mu}$ on $Q$ is always an upper bound for the exponential decreasing rate
$$\rho_x=\liminf_{n\to\infty}\PP^x_{\mu}(\tau_Q>n)^{1/n}.$$
Therefore, our task is to show that it is also a lower bound for $\rho_x$ (at least as $x\to\infty$ in the sense of Theorem~\ref{thm:main_theorem}).

Let $V$ be the reduced support of $\mu$. From now on, we assume that $\inf_Q L_{\mu}>0$, so that $\mu(V)>0$ (Lemma~\ref{lem:existence_of_global_minimum}).
Remember that we have introduced the reduced support with the idea that the best way to stay in $Q$ was to never perform any step outside of $V$.
Hence, denoting $\xi_1,\xi_2,\ldots, \xi_n$ the increments of the random walk, it is natural to use the lower bound
\begin{equation}
\label{eq:probability_reduction}
\PP^x_{\mu}(\tau_Q>n)^{1/n}\geq \PP^x_{\mu}(\tau_Q>n, \xi_1,\xi_2,\ldots, \xi_n\in V)^{1/n}=\mu(V)\PP^x_{\mu\vert V}(\tau_Q>n)^{1/n}.
\end{equation}
A look at the last formula of Lemma \ref{lem:existence_of_global_minimum},
\begin{equation}
\label{eq:infimum_reduction}
\inf_Q L_{\mu}=\mu(V)\min_{V^+}L_{\mu\vert V},
\end{equation}
then explains our strategy: Theorem~\ref{thm:main_theorem} will follow from a comparison between 
$$\liminf_{n\to\infty}\PP^x_{\mu\vert V}(\tau_Q>n)^{1/n}\quad\mbox{ and }\quad \min_{V^+}L_{\mu\vert V}.$$

To simplify notations, set $\nu=\mu\vert V$.
For all $x$ in $Q$, write $x=v+w$ the orthogonal decomposition with respect to $V$ and $V^\perp$.
Then
$$\PP^x_{\nu}(\tau_Q>n)=\PP^v_{\nu}(S_1, S_2, \ldots,S_n\in Q-w).$$
Under $\PP^{v}_{\nu}$, the random walk $S_1, S_2,\ldots, S_n$ almost surely belongs to $V$ (since $v\in V$, and $\nu(V)=1$). Thus,
we have to focus our attention on the geometry
of $(Q-w)\cap V$.

To this end, we recall that 
$$I=\{i\in\inter{1}{d}: e_i\in V\}.$$
We also define
$$I^\perp=\{i\in\inter{1}{d}: e_i\in V^\perp\},$$
and, for $x\in Q$,
$$d(x)=\min_{i\notin I\cup I^\perp}x_i.$$
Notice that $\Vert a-x\Vert\leq d(x)$ implies that $a_i\geq 0$ for all $i\notin I\cup I^\perp$.

Let $V_1=[e_i,i\in I]$ and write $V=V_1\oplus V_2$ the orthogonal decomposition of $V$.
Define the positive orthant of $V_1$ as
$$V_1^+=\{y\in V_1: \sclr{y}{e_i}\geq 0, \forall i\in I\},$$ 
and notice that Lemma~\ref{lem:description_of_V_plus} asserts that
$$V^+=V_1^+\oplus V_2.$$
For any $y\in V$, let $y^{(1)}$ and $y^{(2)}$ be the projections of $y$ onto $V_ 1$ and $V_ 2$, respectively.

\begin{lemma} 
\label{lem:set_where_conditioned_walk_should_stay}
For all $x=v+w\in Q$, holds the inclusion:
$$\{y\in V: y^{(1)}\in V_1^+\mbox{ and }\Vert y^{(2)}-v^{(2)}\Vert\leq d(x) \}\subset (Q-w)\cap V.$$
\end{lemma}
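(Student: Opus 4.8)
The plan is to verify the defining inequalities of $Q-w$ coordinate by coordinate, splitting the index set $\inter{1}{d}$ into the three groups $I$, $I^\perp$, and the remaining indices $i\notin I\cup I^\perp$. Fix $x=v+w\in Q$ with $v\in V$, $w\in V^\perp$, and take any $y\in V$ with $y^{(1)}\in V_1^+$ and $\Vert y^{(2)}-v^{(2)}\Vert\leq d(x)$. We must show $y+w\in Q$, i.e.\ $\sclr{y+w}{e_i}\geq 0$ for every $i$. Since $y\in V$ we already know $y\in V$, so the membership $y\in(Q-w)\cap V$ reduces to the orthant inequalities.

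First I would handle $i\in I$. Here $e_i\in V_1$, so $\sclr{w}{e_i}=0$ (as $w\in V^\perp\subset V_2^\perp\cap V_1^\perp$, in particular $w\perp e_i$) and $\sclr{y}{e_i}=\sclr{y^{(1)}}{e_i}\geq 0$ because $y^{(1)}\in V_1^+$; hence $\sclr{y+w}{e_i}\geq 0$. Next, for $i\in I^\perp$ we have $e_i\in V^\perp$, so $\sclr{y}{e_i}=0$ and $\sclr{y+w}{e_i}=\sclr{w}{e_i}=\sclr{x}{e_i}-\sclr{v}{e_i}=\sclr{x}{e_i}\geq 0$ since $x\in Q$ and $v\perp e_i$; so the inequality holds there too. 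The only remaining and slightly more delicate case is $i\notin I\cup I^\perp$: I would write
$$\sclr{y+w}{e_i}=\sclr{y}{e_i}+\sclr{w}{e_i}=\sclr{y^{(2)}}{e_i}+\bigl(\sclr{x}{e_i}-\sclr{v^{(2)}}{e_i}\bigr),$$
using that $\sclr{y^{(1)}}{e_i}=0$ and $\sclr{v^{(1)}}{e_i}=0$ (the projection of $e_i$ onto $V_1$ vanishes when $i\notin I$, because $V_1$ is spanned by those $e_j$ with $j\in I$ and the standard basis is orthonormal — this is exactly the remark ``$e_i\in V\Leftrightarrow i\in I$'' applied inside $V_1$), together with $\sclr{v}{e_i}=\sclr{v^{(1)}}{e_i}+\sclr{v^{(2)}}{e_i}$ and $\sclr{x}{e_i}=\sclr{v}{e_i}+\sclr{w}{e_i}$. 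Thus
$$\sclr{y+w}{e_i}=\sclr{x}{e_i}+\sclr{y^{(2)}-v^{(2)}}{e_i}\geq x_i-\Vert y^{(2)}-v^{(2)}\Vert\,\Vert e_i\Vert\geq x_i-d(x)\geq 0,$$
by Cauchy--Schwarz, the hypothesis $\Vert y^{(2)}-v^{(2)}\Vert\leq d(x)$, and the definition $d(x)=\min_{i\notin I\cup I^\perp}x_i\leq x_i$ (this is precisely the remark stated just before the lemma: $\Vert a-x\Vert\leq d(x)$ forces $a_i\geq 0$ for $i\notin I\cup I^\perp$).

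The main obstacle is purely bookkeeping: one must be careful that $\sclr{e_i}{e_j}=\delta_{ij}$ for the standard basis so that, for $i\notin I$, the component of $e_i$ along $V_1=[e_j:j\in I]$ is zero, which is what makes $\sclr{y^{(1)}}{e_i}=0$ and $\sclr{v^{(1)}}{e_i}=0$; analogously for $i\notin I^\perp$ one does \emph{not} get $\sclr{w}{e_i}=0$, which is why the third case genuinely needs the norm bound rather than following trivially. Once these orthogonality facts are laid out, the three cases combine to give $y+w\in Q$, and since $y\in V$ we conclude $y\in(Q-w)\cap V$, proving the claimed inclusion.
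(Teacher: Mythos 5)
Your proof is correct and follows essentially the same route as the paper: the same three-way split over $i\in I$, $i\in I^\perp$, and $i\notin I\cup I^\perp$, with the last case resting on the same norm bound (the paper applies the remark $\Vert a-x\Vert\leq d(x)\Rightarrow a_i\geq 0$ to the auxiliary point $a=v^{(1)}+y^{(2)}+w$, which is exactly your coordinatewise Cauchy--Schwarz estimate unpacked). No gaps.
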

\begin{proof}
Let $y\in V$ be such that $y^{(1)}\in V_1^+$ and $\Vert y^{(2)}-v^{(2)}\Vert\leq d(x)$. We have to show that $y+w$ belongs to $Q$.

First of all, for any $i\in I^{\perp}$, we have $e_i\in V^\perp$ so that
$$\sclr{y+w}{e_i}=\sclr{v+w}{e_i}=x_i\geq 0.$$

Similarly, since $\sclr{y+w}{e_i}=\sclr{y^{(1)}}{e_i}$ for all $i\in I$, the condition $y^{(1)}\in V_1^+$  rewrites
$\sclr{y+w}{e_i}\geq 0$ for all $i\in I$.

It remains to check that the conclusion also holds when $i\notin I\cup I^\perp$. To this end, we notice that
$$\Vert (v^{(1)}+y^{(2)}+w)-x\Vert=\Vert y^{(2)}-v^{(2)}\Vert\leq d(x).$$
Therefore, for all $i\notin I\cup I^\perp$, 
$$\sclr{v^{(1)}+y^{(2)}+w}{e_i}\geq 0.$$
But for those indices $i$, we have $e_i\perp V_1$ and consequently
$$\sclr{y+w}{e_i}=\sclr{v^{(1)}+y^{(2)}+w}{e_i}\geq 0.$$
This concludes the proof of the lemma.
\end{proof}

This lemma provides the convenient lower bound: 
\begin{equation}
\label{eq:lower_bound}
\PP^x_{\nu}(\tau_Q>n)\geq \PP^v_{\nu}(\tau_{V^+}>n,  \max_{k\leq n}\Vert S^{(2)}_k-v^{(2)}\Vert\leq d(x)).
\end{equation}
We now analyse this lower bound with the help of Cram\'er's transformation.
We know by Lemma~\ref{lem:existence_of_global_minimum} that there exists $v_0\in V^+$ such that
$$\lambda:=L_{\nu}(v_0)=\min_{V^+}L_{\nu}>0.$$
Let $\nu_0$ be the probability measure on $V$ defined by
$$\lambda \nu_0(dy)=e^{\sclr{v_0}{y}}\nu(dy).$$
Thanks to Cram\'er's formula (see \cite[Lemma 6]{GaRa13}), the lower bound in equation \eqref{eq:lower_bound} can be written as
\begin{equation}
\label{eq:lower_bound_2}
\lambda^n\EE^{v}_{\nu_0}\left(e^{-\sclr{v_0}{S_n-v}}, \tau_{V^+}>n,  \max_{k\leq n}\Vert S^{(2)}_k-v^{(2)}\Vert\leq d(x)\right).
\end{equation}
Since $v_0\in V^+$, we have $\sclr{v_0}{e_i}\geq 0$ for all $i\in I$. Define 
$$K=\{i\in I: \sclr{v_0}{e_i}>0\}.$$
Then $\sclr{v_0}{e_i}=0$ for all $i\in I\setminus K$, so that
\begin{align*}
\vert \sclr{v_0}{S_n-v}\vert &=\vert \sum_{i\in K}\sclr{v_0}{e_i}\sclr{S^{(1)}_n-v^{(1)}}{e_i}+\sclr{v_0}{S^{(2)}_n-v^{(2)}}\vert\\
&\leq (\sum_{i\in K}\sclr{v_0}{e_i}+\Vert v_0\Vert) d(x),
\end{align*}
as soon as $\vert\sclr{S^{(1)}_n-v^{(1)}}{e_i}\vert\leq d(x)$ for all $i\in K$ and $\Vert S^{(2)}_n-v^{(2)}\Vert\leq d(x)$.
Under this additional constraint, the term $\exp{(-\sclr{v_0}{S_n})}$ inside the expectation in \eqref{eq:lower_bound_2} is bounded from below by some positive constant that will disappear in the $n$th root limit. 

Therefore, using the notation
$$\widetilde{\rho}_x=\liminf_{n\to\infty}\PP^x_{\nu}(\tau_Q>n)^{1/n},$$
we have
\begin{equation}
\label{eq:lower_bound_3}
\widetilde{\rho}_x\geq \lambda
\liminf_{n\to\infty}\PP^v_{\nu_0}\left(\tau_{V^+}>n, \max_{k\leq n} \chi(S_k-v)\leq d(x)\right)^{1/n}
\end{equation}
where 
$$\chi(S_k-v)=\max\{\max_{i\in K}\vert\sclr{S^{(1)}_k-v^{(1)}}{e_i}\vert, \Vert S^{(2)}_k-v^{(2)}\Vert\}.$$

The behavior of this last limit is analysed in Theorem~\ref{prop:centered_case_extended}. Indeed, the space $V$ where the probability distribution $\nu$ lives
has the cartesian product structure $V=V_1\oplus V_2$ and the cone writes $V^+=V^+_1 \oplus V_2$, where $V^+_1$ is the positive orthant of $V_1$. Furthermore, Lemma~\ref{lem:existence_of_global_minimum} asserts that $\nu=\mu\vert V$ satisfies assumption (H') of Theorem~\ref{prop:centered_case_extended} with respect to $V^+$; so does $\nu_0$ since $\nu$ and $\nu_0$ are absolutely continuous with respect to each other. Thus, it remains to take a look at its expectation $m_0$ which is given by
$$
m_0=\nabla L_{\nu_0}(0)=\lambda^{-1}\nabla L_{\nu}(v_0).
$$
Let us write $m_0=m_0^{(1)}+m_0^{(2)}$ with $m_0^{(1)}\in V_1$ and $m_0^{(2)}\in V_2$.
Since $v_0$ is a global minimum point of $L_{\nu}$ on $V^+=V_1^+\oplus V_2$, it is easily seen that:
\begin{itemize}
\item $m_0^{(1)}\in V_1^+$,
\item $\sclr{m_0^{(1)}}{e_i}=0$ for all $i\in K$, and 
\item $m_0^{(2)}=0$.
\end{itemize}
Indeed, for all $i\in I$, we have $v_i=\sclr{v_0}{e_i}\geq 0$, and the half-line
$\{v_0+te_i : t\geq -v_i\}$ is included in $V^+$. Therefore, the function
$$t\in[-v_i,\infty)\mapsto f_i(t)=L_{\nu}(v_0+te_i)$$
reaches its minimum at $t=0$. This implies that $\sclr{m_0^{(1)}}{e_i}=\lambda^{-1}f'_i(0)\geq 0$ with equality if $v_i>0$, i.e. if $i\in K$. 
On the other hand, if we take any $h\in V_2$
then the whole line $\{v_0+th : t\in \RR\}$ is included in $V^+$, hence the function $t\in\RR\mapsto L_{\nu}(v_0+th)$ reaches a local minimum at $t=0$.
By consequence,  its derivative $\sclr{\nabla L_{\nu}(0)}{h}=\lambda\sclr{m_0}{h}$ at $t=0$ is always equal to $0$. Thus $m_0^{(2)}=0$. 

Thanks to those properties of $\nu_0$ and its expectation $m_0$, we can apply Theorem~\ref{prop:centered_case_extended} which ensures the existence of some
$\delta\geq 0$ such that
$$
\lim_{R\to\infty}\liminf_{n\to\infty}\PP^v_{\nu_0}\left(\tau_{V^+}>n, \max_{k\leq n} \chi(S_k-v)\leq R\right)^{1/n}=1,
$$
for all $v\in V^+_\delta=\{v\in V: v_i\geq\delta, \forall i\in I\}$. Notice that the probability under consideration reaches its minimum on $V^+_{\delta}$ when $v$ is the ``corner''
point $v_*=\delta\sum_{i\in I}e_i$ (this follows by inclusion of events). Hence it follows from \eqref{eq:lower_bound_3}
that  for any $x=v+w\in Q$ with $v\in V^+_\delta$,
$$
\widetilde{\rho}_x\geq \lambda
\liminf_{n\to\infty}\PP^{v_*}_{\nu_0}\left(\tau_{V^+}>n, \max_{k\leq n} \chi(S_k-v_*)\leq d(x)\right)^{1/n}.
$$
Since $d(x)$ is now disconnected from the starting point $v_*$ we can let $d(x)\to\infty$, thus proving that
$$
\lim_{d(x)\to\infty\atop x\in Q_\delta}\widetilde{\rho}_x\geq \lambda=\min_{V^+}L_{\mu\vert V}.
$$
Theorem~\ref{thm:main_theorem} then follows from the combination of this inequality with \eqref{eq:probability_reduction} and \eqref{eq:infimum_reduction}.

\section{The favorable case}
\label{sec:favorable_case}

In this section, we consider a square integrable random walk $(S_n)$ in $\RR^d=\RR^p\times\RR^q$ with distribution $\mu$, mean $m$ and variance-covariance matrix $\Gamma$.
For all $x\in\RR^d$, we denote by $x_i$, $i=1\ldots d$, its coordinates in the standard basis, and $x^{(1)}\in\RR^p$ and $x^{(2)}\in\RR^q$ its ``coordinates'' with respect to the cartesian product $\RR^p\times\RR^q$.
Let $Q$ denote the positive orthant of $\RR^p$, i.e.
$$
Q=\{x\in\RR^p: x_i\geq 0, \forall i=1\ldots p\}.
$$
We are interested in the tail distribution of the exit time
$$\tau_K=\inf\{n\geq 1 : S_n\notin K\}$$
of the random walk from the cartesian product
$$K=Q\times\RR^q,$$
in the {\em favorable} case where $m\in Q\times\{0\}^q$, but with the additional constraint that some zero-mean coordinates of the walk stay in a bounded domain. In what follows, we assume the random walk is well oriented (with respect to $K$), i.e.~the probability distribution satisfies the following condition:
\begin{itemize}
\item[(H')] The support of $\mu$ is not included in any half-space $u^-=\{x\in\RR^d : \sclr{x}{u}\leq 0\}$ with $u\in Q\times\{0\}^q\setminus\{0\}$.
\end{itemize}

Let $J$ be the set of indices $j$ such that $m_j=0$.
Denote by $\RR^{(J)}$ the subspace of $J$-coordinates, that is
$$\RR^{(J)}=\{x\in \RR^{d}: x_i=0, \forall i\notin J\},$$
and let
$x^{(J)}$ be the projection of $x\in\RR^d$ on $\RR^{(J)}$. Let also
\begin{equation}
\label{eq:norme_ji}
\Vert x\Vert_J=\Vert x^{(J)}\Vert=\left(\sum_{i\in J}\vert x_i\vert^2\right)^{1/2}
\end{equation}
be the norm of the projection of $x$ on the subspace of $J$-coordinates.

We shall prove in this setting the following result that extends Theorem~13 of~\cite{GaRa13}.

\begin{theorem}
\label{prop:centered_case_extended}
Assume $\mu$ satisfies (H'), and that $m\in Q\times\{0\}^q$. Let $J$ be the set of indices $j$ such that $m_j=0$. There exists $\delta\geq 0$ such that
$$\lim_{R\to\infty}\liminf_{n\to\infty}\PP_{\mu}^x\left(\tau_K>n, \max_{k\leq n}\Vert S_k-x\Vert_J \leq R\right)^{1/n}=1$$
for all $x\in K_{\delta}=Q_{\delta}\times\RR^q$.
\end{theorem}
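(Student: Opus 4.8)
The strategy is to reduce to the unconstrained result (Theorem~13 of \cite{GaRa13}) by building, on a large block of $n$ steps, an event on which the walk both stays in $K$ and keeps its $J$-coordinates within the window $\Vert S_k-x\Vert_J\le R$. First I would isolate the ``bad'' coordinates: write $\RR^d=\RR^{(J)}\oplus\RR^{(J^c)}$, and note that on $\RR^{(J^c)}$ the walk has a strictly positive drift in the $Q$-directions indexed outside $J$, while the $J$-coordinates form a centered (square integrable) random walk because $m_j=0$ for $j\in J$. The drift coordinates with index in $J^c\cap\inter{1}{p}$ push the walk into $Q$ linearly, so after a bounded number of steps they give no obstruction; the real issue is the centered $J$-coordinates, which must stay both in the relevant orthant constraints (those $i\in J\cap\inter{1}{p}$) and inside a ball of radius $R$.

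Next I would invoke, as a black box, the well-oriented result: since $\mu$ satisfies (H$'$) and $m\in Q\times\{0\}^q$, Theorem~13 of \cite{GaRa13} gives a $\delta\ge 0$ with
$$\liminf_{n\to\infty}\PP_\mu^x(\tau_K>n)^{1/n}=1$$
for $x\in K_\delta$ — i.e.\ the exponential rate is already $1$ without the window constraint. The plan is then a ``restart'' or block argument: fix a large time $T=T(R)$; on the first $T$ steps ask that the walk stay in $K$ \emph{and} return with its $J$-coordinates close to a good interior position (using the functional CLT / Donsker for the centered $J$-part to see this costs only a constant, not an exponential, factor — here $T\asymp R^2$ is the natural scale so that a centered walk of length $T$ is typically of size $R$); then on each subsequent block of length $T$ repeat, each time keeping $\Vert S_k-x\Vert_J\le R$ by a Brownian-type fluctuation bound for the centered coordinates, while the non-$J$ coordinates are handled by the positive drift exactly as in the well-oriented proof. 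Concatenating $\lfloor n/T\rfloor$ such blocks and using independence of increments, one gets
$$\PP_\mu^x\!\left(\tau_K>n,\ \max_{k\le n}\Vert S_k-x\Vert_J\le R\right)\ \ge\ c(R)^{\,n/T}\,\big(\text{rate-}1\text{ factor}\big),$$
so the $n$th root tends to $c(R)^{1/T}\to 1$ as $R\to\infty$ once $T=T(R)\to\infty$ appropriately.

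More precisely, I expect the cleanest route is: (1) apply Theorem~13 of \cite{GaRa13} to the projected walk on $\RR^{(J^c)}$ relative to the orthant constraints indexed by $J^c\cap\inter{1}{p}$, to control the non-$J$ coordinates and get the base rate $1$; (2) for the $J$-coordinates, which are centered and only have the orthant constraints from $J\cap\inter{1}{p}$ to respect, use a local CLT / invariance principle to show that a centered walk of length $T$ started at height $\sim R$ above those faces stays positive there and ends near its start with probability bounded below by a constant independent of $n$; (3) combine via the Markov property over $\lfloor n/T\rfloor$ blocks; (4) let $n\to\infty$ then $R\to\infty$ with $T=T(R)\to\infty$. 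The $\delta$ in the statement is the same $\delta$ coming from \cite{GaRa13} — it is there to push the starting point away from the faces so that the positive-drift coordinates do not exit immediately.

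\textbf{Main obstacle.} The delicate point is step (2): handling the centered $J$-coordinates that simultaneously carry an orthant constraint (those $i\in J\cap\inter{1}{p}$) \emph{and} a boundedness constraint. Keeping a centered random walk positive for $n$ steps already costs a polynomial factor $n^{-\alpha}$ (Denisov–Wachtel), which is harmless at the exponential scale, but one must ensure the extra demand ``stay in a ball of radius $R$'' does not introduce an exponential cost. The resolution is that on a \emph{bounded} number $T$ of steps this is just a constant-probability event, and the block/restart structure confines the $n$-dependence to the exponent $n/T$; the real work is verifying that the conditioned centered walk can be made to ``reset'' to a favorable interior position at the end of each block with probability bounded away from $0$ uniformly in the block index, which is where a local limit theorem for the killed walk (or the construction of a suitable deterministic path followed by a fluctuation estimate) is needed. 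One also has to check that the non-$J$, non-orthant ($j\in J^c\setminus\inter{1}{p}$) coordinates — which live in the $\RR^q$ factor and have zero drift but no constraint at all — do not interfere; they are genuinely free, so they pose no difficulty beyond bookkeeping.
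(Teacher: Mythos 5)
Your overall architecture (a constant-cost preparatory phase followed by a confinement phase whose per-step cost tends to $1$) matches the paper's, but the proposal leaves the hardest step unresolved and, as written, one step would fail. First, you cannot ``apply Theorem~13 of \cite{GaRa13} to the projected walk on $\RR^{(J^c)}$'' and treat the $J$-coordinates as a separate walk: the two groups of coordinates of $S_n$ are in general correlated (the covariance matrix need not be block-diagonal, and may even be degenerate), so probabilities of joint events do not factor; moreover the unconstrained conclusion $\liminf_n\PP^x_\mu(\tau_K>n)^{1/n}=1$ used as a black box says nothing about the joint event with the window, so it cannot serve as the ``rate-$1$ factor'' in your product. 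Second, your step (2) assumes the centered orthant-constrained coordinates are ``started at height $\sim R$ above those faces,'' but the theorem must hold for every $x\in K_\delta$ with $\delta$ fixed once and for all, while $R\to\infty$; getting from depth $\delta$ to depth comparable to $R$ while respecting both constraints is precisely the nontrivial ``push'' step, and proving it uniformly in the starting point requires the geometric input that the affine support $m+(\ker\Gamma)^\perp$ of $\mu$ meets the interior of $K$ --- this is where (H') actually enters (Lemma~\ref{lem:support_vs_cone}) --- together with a compactness argument that, in the degenerate case, needs the polyhedral boundedness result of the Appendix. None of this appears in your plan.

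The paper's resolution of what you call the main obstacle is also instructive to contrast with yours. You propose blocks of length $T(R)\asymp R^2$ on which the centered $J$-coordinates must simultaneously stay nonnegative and stay in a ball, which forces you toward a local limit theorem for a walk killed in an orthant-strip. The paper avoids this entirely: Proposition~\ref{prop:n_steps_push_forward} first drives the walk into $K_\ell$ with $\ell\geq R-R_0$ at cost $\gamma^{\ell}$ (a constant in $n$), and from such a point $y$ the single event that the centered walk stays in the ball of radius $R-R_0$ around $y$ already implies $\tau_K>n$, since $y$ is at distance at least $R-R_0$ from the boundary and $m\in K$. The cone constraint is thus subsumed by the ball constraint, and the confinement phase reduces to Proposition~\ref{prop:path_staying_in_a_ball}, an unconstrained Donsker-type estimate for a centered walk in a ball. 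If you want to salvage your plan, the essential missing idea is exactly this: make the preparatory phase reach depth comparable to $R$, so that the killed-walk estimate you defer as ``the real work'' is never needed.
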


Theorem~\ref{prop:centered_case_extended} will follow from the two propositions below. Roughly speaking, the first one  will enable us to push the random walk as far as we want from the boundary
of the cone $K$ (with a positive probability):

\begin{proposition}
\label{prop:n_steps_push_forward} Under the hypotheses of Theorem~\ref{prop:centered_case_extended},
there exist $\gamma>0$, $b\geq 1$ and $\delta, R>0$ such that
$$
\PP_{\mu}^x\left(\tau_K>b\ell, S_{b\ell}\in K_{\ell}, \max_{k\leq b\ell}\Vert S_k-x\Vert_J\leq R \right)\geq \gamma^\ell,
$$
for all $\ell\geq 1$ and $x\in K_\delta$.
\end{proposition}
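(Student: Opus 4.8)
The plan is to build the desired trajectory in two stages and then iterate. Since $\mu$ satisfies (H') with respect to $K=Q\times\RR^q$ and $m\in Q\times\{0\}^q$, the coordinates $j\notin J$ have strictly positive mean $m_j>0$, while the coordinates $j\in J$ have zero mean. The strategy is: first perform a bounded number of steps that move the walk from $x\in K_\delta$ to a point sitting at height $\sim\ell$ above the boundary in the positive-mean directions, while keeping the $J$-coordinates inside a fixed ball; then use the law of large numbers / a large-deviations lower bound to see that over $b\ell$ steps the positive-drift coordinates typically gain order $\ell$, so they remain positive, and the $J$-coordinates — being a centered square-integrable random walk — stay within a window of size $R\asymp\sqrt{b\ell}$ with probability bounded below by $\gamma^\ell$ for suitable $\gamma,b,R$. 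Concatenating these, the total time is $b\ell$ and the probability cost is the product of a single (fixed, positive) factor for the first stage and the $\gamma^\ell$ factor for the bulk, which is again of the form $\gamma^\ell$ after shrinking $\gamma$.

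More concretely, I would proceed as follows. \emph{Step 1 (a single good block).} Because $\mu$ satisfies (H'), there is a vector $a$ in the support of $\mu$ (or a convex combination realized by a few steps) whose projection onto the positive-mean coordinates is strictly positive; equivalently, by (H') the walk can, with positive probability in a bounded number $b_0$ of steps, increase every non-$J$ coordinate by at least some fixed $\eta>0$ while the $J$-coordinates move by a bounded amount and the walk stays in $K$. Chaining $\ell$ such blocks (each conditioned also on $\tau_K>$ current time, which only helps since each block already stays in $K$) gives, after $b_0\ell$ steps, a point in $K_\ell$ (for $\delta$ chosen so the starting height is irrelevant) with $\|S_k-x\|_J$ bounded by $C\ell$ — not yet good enough for a fixed $R$. \emph{Step 2 (damping the $J$-coordinates).} To keep $\|S_k-x\|_J\le R$ with $R$ a \emph{constant}, I instead interleave: the positive-drift coordinates need only a net upward push of order $\ell$, but the $J$-coordinates are centered, so I use the standard fact that for a centered, square-integrable random walk $Z$ in $\RR^{|J|}$ one has $\PP(\max_{k\le n}\|Z_k\|\le R,\ Z_n\in\text{small ball})\ge c(R)^n$ for every fixed $R>0$ and some $c(R)>0$ (a Cramér/moderate-deviation lower bound, or simply a direct small-ball estimate: the event that $Z$ makes $n$ steps each of bounded size and returns near $0$ has probability at least $(\text{const})^n$ when the increment law has $0$ in the interior of the convex hull of its support — which (H') guarantees for the $J$-part). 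Running this for $n=b\ell$ steps gives the $\gamma^\ell$ lower bound on the $J$-part, and on the same event the positive-drift coordinates, being sums of i.i.d. variables with positive mean restricted to a favorable event, stay positive (shrink $\gamma$ to absorb the probability that they dip).

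\emph{Combining.} Put $b=b_0+b_1$ and glue the Step-1 block (cost a fixed constant $c_1>0$, independent of $\ell$, producing height $\ge\ell'$ with $\ell'\asymp\ell$) in front of $\ell$ repetitions of the Step-2 block (each of length $b_1$, cost $\ge\gamma_0$ per unit, staying in $K$ because the positive-drift coordinates keep climbing and the $J$-coordinates stay in the fixed ball of radius $R$). By the Markov property the joint probability is $\ge c_1\,\gamma_0^{b_1\ell}\ge\gamma^\ell$ for $\gamma:=\min(c_1,\gamma_0^{b_1})$ and $\ell$ large; adjusting constants handles small $\ell$. Throughout, $\delta$ is chosen large enough that the worst starting point $x=v_*$ (all non-$J$ coordinates equal to $\delta$) already has enough room, and $R$ is chosen once and for all as the radius needed in Step 2.

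\emph{Main obstacle.} The delicate point is reconciling the two conflicting requirements in a single time window of length $b\ell$: the positive-drift coordinates must travel a \emph{macroscopic} distance $\asymp\ell$ (forcing $n\asymp\ell$ steps at least), while the $J$-coordinates must be confined to a \emph{fixed} window $R$ over all $n\asymp\ell$ steps — an event of probability exponentially small in $\ell$, which is exactly why the bound is $\gamma^\ell$ rather than polynomial. Making the constant $\gamma$ uniform in the starting point $x\in K_\delta$ (not just at $v_*$) is handled by the monotonicity/inclusion-of-events remark already used in the paper, but one must be careful that pushing forward in the non-$J$ directions does not force the $J$-coordinates out of the ball — this is why (H') is invoked for the $J$-part, guaranteeing that a step increasing the drift coordinates can be chosen essentially orthogonal (in law) to the $J$-directions, or that any $J$-excursion it creates can be cancelled within the block. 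Getting this ``decoupling'' clean, together with the square-integrability used for the small-ball estimate on the $J$-coordinates, is where the real work lies; the rest is bookkeeping with the Markov property.
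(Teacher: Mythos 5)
Your high-level picture (climb in the drift directions while confining the $J$-coordinates to a fixed window, at exponential cost $\gamma^\ell$) matches the paper's, but the proof has a genuine gap at exactly the point you flag as ``where the real work lies'': the decoupling is never established, and the two substitutes you offer for it do not hold as stated. First, the small-ball estimate $\PP(\max_{k\le n}\Vert Z_k\Vert\le R)\ge c(R)^n$ is false for \emph{every} fixed $R>0$ (take increments of norm $10$ and $R=1$); it does hold for $R$ large enough, but proving that is precisely the content of Proposition~\ref{prop:path_staying_in_a_ball} and Lemma~\ref{lem:pseudo-recurrent_path_with_size_control}, which the paper establishes separately via the functional CLT, so it cannot simply be quoted inside the proof of Proposition~\ref{prop:n_steps_push_forward}. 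More seriously, even granting it, the $J$-coordinates and the positive-drift coordinates are dependent components of the same increments: you cannot multiply a confinement probability for the $J$-part by a law-of-large-numbers statement for the non-$J$ part, and conditioning on an event of probability $\gamma^\ell$ can destroy the drift of the remaining coordinates, so ``shrink $\gamma$ to absorb the probability that they dip'' is not an argument. Second, (H$'$) does not directly produce a bounded block of steps raising every constrained coordinate while moving the $J$-coordinates boundedly: (H$'$) only excludes half-spaces $u^-$ with $u\in Q\times\{0\}^q$, and when $\Gamma$ is degenerate the walk lives on a proper affine subspace, so the existence of such a favorable move is a nontrivial geometric fact (Lemma~\ref{lem:support_vs_cone}).

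The paper resolves all of this with one device you are missing: a single-block lemma (Lemma~\ref{lem:one_step_push_forward}) asserting that in a \emph{fixed} number $b$ of steps the walk moves from any $x\in K\cap\close{B_J(0,R_0)}\cap\G$ into $K_1$ \emph{and} returns its $J$-coordinates to $\close{B_J(0,R_0)}$, with probability bounded below uniformly in $x$. This is proved by contradiction: after diffusive rescaling the block converges to a Gaussian supported on $x+F$, and the target is a nonempty open subset of $x+F$ by Lemma~\ref{lem:support_vs_cone}; uniformity over the unbounded set of admissible starting points is obtained by first replacing $x$ by a dominated \emph{bounded} point of the smoothed support $\G$ (Lemma~\ref{lem:minimizing_sequence}, resting on the polyhedron results of the Appendix) and then compactness. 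Your ``monotonicity/inclusion-of-events'' remark handles only the constrained coordinates, not the free $\RR^q$-coordinates nor the requirement of staying on $\G$ (without which the limiting Gaussian gives the target set measure zero). Once the block lemma is available, the $\gamma^\ell$ bound follows by iterating it $\ell$ times with the Markov property, with no separate treatment of the $J$-part; I recommend reorganizing your argument around such a uniform block lemma.
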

Now, as soon as the random walk has reached a point $y$ at a distance $\geq R$ from the boundary, it suffices that the walk stays in $B(y,R)$ so as to be sure that it will not leave the cone.
This simple observation will enable us to derive the theorem from the second proposition:
\begin{proposition}
\label{prop:path_staying_in_a_ball}
Assume $(\widetilde{S}_n)$ is square integrable random walk with mean $m=0$ and any variance-covariance matrix. Then
$$
\lim_{R\to\infty}\liminf_{n\to\infty}\PP^0_{\mu}\left(\max_{k\leq n}\Vert \widetilde{S}_k\Vert\leq R\right)^{1/n}=1.
$$
\end{proposition}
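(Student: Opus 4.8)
The plan is to exploit the fact that a centered square-integrable random walk lives, at time $n$, on the scale $\sqrt n$; so keeping it in a fixed ball $B(0,R)$ for $n$ steps costs only a subexponential amount of probability once $R$ is large, but fixed. First I would reduce the problem to a one-dimensional statement by a union bound: if $(\widetilde S_n^{(1)},\ldots,\widetilde S_n^{(d)})$ are the coordinates, then $\{\max_{k\leq n}\Vert\widetilde S_k\Vert\leq R\}\supseteq\bigcap_{j=1}^d\{\max_{k\leq n}\vert\widetilde S_k^{(j)}\vert\leq R/\sqrt d\}$, and each coordinate process is itself a centered square-integrable (real) random walk. A standard FKG/association argument (or just the observation that for the purpose of a lower bound one may intersect events and multiply probabilities after passing to independent copies via a product-space comparison — here I would instead keep it clean by noting it suffices to prove the one-dimensional case and then take $R\mapsto R/\sqrt d$ and raise to a fixed power $d$, which does not affect the $n$th-root limit) lets me assume $d=1$.

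For the one-dimensional statement, fix a small $\varepsilon>0$. The key quantitative input is a ``no-escape'' estimate: there is a constant $c>0$ (depending on the increment distribution, through its variance $\sigma^2$) and an $R_0$ such that for all $R\geq R_0$,
$$\PP^0_\mu\Big(\max_{k\leq \lfloor cR^2\rfloor}\vert\widetilde S_k\vert\leq R\Big)\geq \tfrac12.$$
This follows from Kolmogorov's maximal inequality (or Doob's $L^2$ inequality applied to the martingale $\widetilde S_k$): $\PP(\max_{k\leq N}\vert\widetilde S_k\vert> R)\leq N\sigma^2/R^2$, so choosing $N=\lfloor cR^2\rfloor$ with $c=1/(2\sigma^2)$ gives the bound. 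Now I would chop $[0,n]$ into $\lceil n/N\rceil$ consecutive blocks of length $N=\lfloor cR^2\rfloor$. By the Markov property and translation invariance of the increments, the event that within each block the walk oscillates by at most $R/2$ around its value at the start of the block implies, by the triangle inequality after telescoping — wait, that is not quite enough, since the block-start values could drift. Instead I would use the cleaner sub-blocking: require that at the end of each block the walk is back within $[-R/2,R/2]$... but that event is not lower-bounded by a constant.

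The correct and standard device is a renewal/regeneration-free argument via the following one-step-in-blocks bound: let $A_R=\{\max_{k\leq N}\vert\widetilde S_k\vert\leq R/2\}$, which has $\PP^0_\mu(A_R)\geq 1/2$ for $R\geq R_0$ by the maximal inequality with $c$ adjusted. I then I would simply note that we do not need the walk to return near $0$; rather, I'd prove a genuinely stronger block statement by induction on the number of blocks, using that the increments are i.i.d.: let $p_m(R)=\inf_{\vert y\vert\leq R/2}\PP^y_\mu(\tau>0\ \ldots)$ — this is getting circular. Let me state the step I actually expect to use. By the maximal inequality, for $N=\lfloor R^2/(2d\sigma^2 m^2)\rfloor$-type choices one gets $\PP^0_\mu(\max_{k\leq N}\vert\widetilde S_k\vert\le R)\ge 1-1/(2m^2)$ for every $m$; thus over $m$ independent blocks, $\PP^0_\mu(\max_{k\leq mN}\vert\widetilde S_k\vert\le R)$... still suffers from drift across blocks. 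The honest fix, which is the one I would write up, is to apply the maximal inequality on the \emph{whole} interval with the scale tied to $n$: for the quantity $\PP^0_\mu(\max_{k\le n}\vert\widetilde S_k\vert\le R)$ we cannot get $1$, but for the $n$th root we argue differently — partition $[0,n]$ into $L=\lceil n/N\rceil$ blocks with $N=N(R)$ fixed, and bound below
$$\PP^0_\mu\Big(\max_{k\le n}\vert\widetilde S_k\vert\le R\Big)\ \ge\ \PP^0_\mu\Big(\bigcap_{\ell=0}^{L-1}\{\text{increments in block }\ell\text{ keep the partial sum of that block in }[-R/(2L'),\,R/(2L')]\}\Big)$$
— no. I will instead simply cite the clean route: the events $B_\ell=\{$the segment of the walk during block $\ell$, \emph{recentered at its own starting point}, stays within $[-\eta,\eta]\}$ are i.i.d.\ with probability $p(\eta,N)>0$, and on $\bigcap_{\ell<L}B_\ell$ the walk stays within $[-L\eta,L\eta]$; taking $\eta$ fixed this is useless, but taking instead the single-block event ``stays in $[-R,R]$ and \emph{ends in} $[-\epsilon_0,\epsilon_0]$'' has probability bounded below by a local CLT / Portmanteau argument — this is exactly Lemma-type content the authors presumably prove.

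Given the length constraints, here is the plan I would actually commit to in the paper. Reduce to $d=1$ as above. Fix $R$. Choose $N=N(R)$ and a constant $\kappa(R)\in(0,1)$ such that, by the maximal inequality, $\PP^0_\mu(\max_{k\le N}\vert\widetilde S_k\vert\le R/2)\ge 3/4$; and, by the central limit theorem together with a mild aperiodicity-free argument (Portmanteau on the open set $(-R/4,R/4)$), $\PP^0_\mu(\max_{k\le N}\vert\widetilde S_k\vert\le R/2,\ \widetilde S_N\in(-R/4,R/4))\ge 1/2$ for $N=N(R)$ large enough — this uses that $\widetilde S_N/\sqrt N$ is asymptotically Gaussian, hence $\widetilde S_N$ concentrates on scale $\sqrt N\ll R$ once $N\ll R^2$, while the maximum over the block is $O_p(\sqrt N)$ as well. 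Call $\beta(R)=\PP^0_\mu(\max_{k\le N}\vert\widetilde S_k\vert\le R/2,\ \widetilde S_N\in[-R/4,R/4])\in(0,1)$. Now telescope: with $L=\lceil n/N\rceil$ blocks, on the intersection of the $L$ events ``block $\ell$, recentered at $\widetilde S_{\ell N}$, has max $\le R/2$ and endpoint displacement $\le R/4$... '' — the recentering is the problem again. So: use instead the \emph{Markov property} with the strong form $\PP^y_\mu(\max\le R)\ge\beta(R)$ for all starting points $y\in[-R/4,R/4]$ (by translation, $\PP^y_\mu(\max_{k\le N}\vert\widetilde S_k\vert\le R/2,\ \widetilde S_N\in[-R/4,R/4])\ge\PP^0_\mu(\max_{k\le N}\vert\widetilde S_k\vert\le R/4,\ \widetilde S_N\in[0,0]\text{-nbhd})\ge\beta'(R)>0$). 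Then, conditioning block by block,
$$\PP^0_\mu\Big(\max_{k\le n}\vert\widetilde S_k\vert\le R\Big)\ \ge\ \beta'(R)^{L}\ \ge\ \beta'(R)^{\,n/N(R)+1},$$
so $\liminf_n \PP^0_\mu(\max_{k\le n}\vert\widetilde S_k\vert\le R)^{1/n}\ge \beta'(R)^{1/N(R)}$. Finally, as $R\to\infty$ one may take $N(R)\to\infty$ as fast as, say, $N(R)=R$, so that $\beta'(R)^{1/N(R)}\to 1$ (since $\beta'(R)$ stays bounded away from $0$ — indeed $\beta'(R)\to 1$ — while $N(R)\to\infty$). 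This yields $\lim_{R\to\infty}\liminf_n(\cdots)^{1/n}=1$, as claimed.

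The main obstacle, clearly, is the block-concatenation step: one must arrange the per-block event so that (a) it has probability bounded below \emph{uniformly in the starting point} over a window of radius comparable to $R$, and (b) on the intersection over all blocks the walk genuinely stays in $B(0,R)$ without the block-start positions drifting out. The resolution is to build into the single-block event both a ``stay in the half-ball'' clause (controlled by Kolmogorov's maximal inequality, giving a constant not too far below $1$ when the block length is $\ll R^2$) and an ``endpoint returns to the quarter-ball'' clause (controlled by the CLT, which forces the endpoint onto scale $\sqrt{N}\ll R$), so that the Markov property can be iterated with the same lower bound at every step. Everything else — the reduction to one dimension, the passage to the $n$th root, and the final $R\to\infty$ limit with $N(R)\to\infty$ slowly — is routine.
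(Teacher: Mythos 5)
Your overall strategy---chop $[0,n]$ into blocks of length $N(R)$, design a one-block event that both keeps the walk in the ball and forces it back into a smaller ball so the Markov property can be iterated, then let $R\to\infty$---is the same as the paper's, and your observation that the per-block probability need only be bounded away from $0$ (rather than close to $1$) provided $N(R)\to\infty$ is a legitimate simplification of the bookkeeping. However, as written the proposal has two genuine gaps, the second of which you flag repeatedly but never close.

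First, the reduction to $d=1$ is unjustified: the coordinates of $\widetilde{S}_k$ are dependent, so $\PP\bigl(\bigcap_j\{\max_k\vert\widetilde{S}_k^{(j)}\vert\le R/\sqrt d\}\bigr)$ cannot be bounded below by the product of the marginal probabilities without an association inequality, which is not available for a general covariance matrix. (This step is also unnecessary, since the maximal inequality and Donsker's theorem work directly in $\RR^d$; but note the degenerate-covariance case then needs the projection onto $(\ker\Gamma)^\perp$, which you do not address.) Second, and more seriously, the uniform per-block estimate is never actually established. Your final claim is that for all $y$ with $\vert y\vert\le R/4$ one has $\PP^y(\max_{k\le N}\vert\widetilde{S}_k\vert\le R/2,\ \widetilde{S}_N\in[-R/4,R/4])\ge\PP^0(\max_{k\le N}\vert\widetilde{S}_k\vert\le R/4,\ \widetilde{S}_N\in U)$ for some fixed neighbourhood $U$ of $0$, ``by translation''. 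This is false: translating by $y$ turns the endpoint condition into $\widetilde{S}_N\in[-R/4-y,\,R/4-y]$, and the intersection of these windows over $\vert y\vert\le R/4$ is the single point $\{0\}$, which may carry zero mass. So no fixed event of positive probability works for every admissible starting point, and the block-start positions can still drift out---exactly the obstruction you kept hitting. The paper resolves this by (i) replacing ``$\widetilde{S}_N$ lands in the small ball at time $N$'' with ``the walk visits $\overline{B(0,R_0)}$ at \emph{some} time in a window $\inter{\ell_0}{n_0}$'', which is then concatenated via the \emph{strong} Markov property at that hitting time, and (ii) proving the uniform lower bound over starting points in $\overline{B(0,R_0)}$ not by translation but by a compactness argument combined with Donsker's theorem, the Portmanteau theorem, and the regularity of the boundary points of a ball for Brownian motion (needed precisely for starting points on the boundary of the return ball). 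Some such uniformization device is indispensable, and supplying it is the actual content of the proof.
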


The proofs of Proposition~\ref{prop:n_steps_push_forward} and~\ref{prop:path_staying_in_a_ball} are deferred to section~\ref{subsec:push_walk_deep}
and section~\ref{subsec:exit_time_ball}, respectively.
First of all, let us explain precisely how the combination of those two propositions leads to Theorem~\ref{prop:centered_case_extended}.

\subsection{Proof of Theorem~\ref{prop:centered_case_extended}}

Proposition~\ref{prop:n_steps_push_forward} ensures the existence of $\gamma>0$, $b\geq 1$ and $\delta, R_0>0$ such that
$$
\PP_{\mu}^x\left(\tau_K>b\ell, S_{b\ell}\in K_{\ell}, \max_{k\leq b\ell}\Vert S_k-x\Vert_J\leq R_0 \right)\geq \gamma^\ell,
$$
for all $\ell\geq 1$ and $x\in K_\delta$.

Let $\epsilon>0$ be given. Applying Proposition~\ref{prop:path_staying_in_a_ball} to the centered random walk
$\widetilde{S}_n=S_n-nm$, we obtain the existence of a number $R\geq R_0$ such that
$$
\PP^0_{\mu}\left(\max_{k\leq n}\Vert \widetilde{S}_k\Vert\leq R-R_0\right)\geq (1-\epsilon)^n,
$$
for all $n$ large enough. Now fix $\ell\geq R-R_0$ and suppose that $y\in K_{\ell}\cap\close{B_J(x,R_0)}$.
If 
$$\max_{k\leq n}\Vert \widetilde{S}_k-y\Vert\leq R-R_0(\leq\ell),$$
then:
\begin{enumerate}
\item Clearly, $\widetilde{S}_k$ belongs to $K$ for all $k\leq n$. Since $S_k=\widetilde{S}_k+km$ and $m$ belongs to $K$, the same is true for $S_k$, thus $\tau_K>n$.
\item For all $k\leq n$, we have
$$ \Vert S_k-x\Vert_J=\Vert \widetilde{S}_k-x\Vert_J\leq \Vert \widetilde{S}_k-y\Vert+\Vert y-x\Vert_J\leq R.$$
\end{enumerate}
Therefore, if we consider only trajectories such that $S_{b\ell}\in K_\ell$ and $\Vert S_{b\ell}-x\Vert_J\leq R_0$, and then use the Markov property at time $b\ell$, we obtain the lower bound
\begin{equation*}
\begin{split}
\PP_{\mu}^x\left(\tau_K>n, \max_{k\leq n}\Vert S_k-x\Vert_J \leq R\right)
&\geq \gamma^\ell \times \inf_{y}\PP_{\mu}^y\left(\tau_K>n-b\ell, \max_{k\leq n-b\ell}\Vert S_k-x\Vert_J\leq R\right)\\
&\geq \gamma^\ell \times \inf_{y}\PP_{\mu}^y\left(\max_{k\leq n-b\ell}\Vert \widetilde{S}_k-y\Vert\leq R-R_0\right)\\
&\geq \gamma^\ell \times (1-\epsilon)^{n-b\ell},
\end{split}
\end{equation*}
where the infimum is taken over all $y\in K_{\ell}\cap\close{B_J(x,R_0)}$.
Consequently
$$
\liminf_{n\to\infty}\PP_{\mu}^x\left(\tau_K>n, \max_{k\leq n}\Vert S_k-x\Vert_J \leq R\right)^{1/n}\geq 1-\epsilon,
$$
and the theorem is proved.

\subsection{Pushing the walk deep inside the cone}
\label{subsec:push_walk_deep}

This section is devoted to the proof of Proposition~\ref{prop:n_steps_push_forward}. 
In what follows,
the distribution $\mu$ of the random walk increments is assumed to satisfy assumption (H'). 
Let $m$ be the expectation of $\mu$ and
$$F=(\ker\Gamma)^{\perp},$$
where $\Gamma$ is the variance-covariance matrix of $\mu$. It is well-known that the smallest affine subspace of $\RR^d$ with full $\mu$-probability is
$$m+F.$$
Therefore, assumption (H') ensures that there exists no $u\in Q\times\{0\}^q\setminus\{0\}$ such that $m+F\subset u^{-}$.

We define the {\em smoothed support} $\G$ of the random walk as
$$\G=\RR_+m+F,$$
and notice that, if started at any point in $\G$, the random walk stays in $\G$ forever.

In addition, we assume that 
$$m\in Q\times\{0\}^q,$$ and define the set
$$J=\{j\in\inter{1}{d} : m_j=0\}\supset\inter{p+1}{q}.$$
Finally, let 
$$B_J(0,R)=\{x\in\RR^d : \Vert x\Vert_J <R\},$$
where $\Vert * \Vert_J$ is defined as in \eqref{eq:norme_ji}.

\subsubsection{Some geometry}
We collect here two technical lemmas related to the geometry of the problem.
The first one asserts that the affine support of $\mu$ meets the interior of the cone $K=Q\times \RR^q$. This is crucial since otherwise we couldn't expect the walk to go deep inside the cone.
\begin{lemma}
\label{lem:support_vs_cone}
Assume (H') is satisfied and $m\in Q\times\{0\}^q$. Then
\begin{equation}
\label{eq:basic_geometry}
(m+F)\cap\open{K}\not=\emptyset
\end{equation}
Moreover, for all $x\in K\cap\G$, 
\begin{equation}
\label{eq:basic_geometry_extended}
(x+m+F)\cap\open{K}\cap B_J(0,1)\not=\emptyset.
\end{equation}

\end{lemma}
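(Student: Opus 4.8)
The plan is to prove \eqref{eq:basic_geometry} first and then bootstrap to \eqref{eq:basic_geometry_extended}. For the first part, I would argue by contradiction and contraposition. Suppose $(m+F)\cap\open{K}=\emptyset$. The affine subspace $m+F$ and the open convex set $\open{K}=\open{Q}\times\RR^q$ are disjoint convex sets, so by the separating hyperplane theorem there is a nonzero vector $u\in\RR^d$ and a real $c$ with $\sclr{u}{z}\leq c$ for all $z\in\open{K}$ and $\sclr{u}{z}\geq c$ for all $z\in m+F$. Since $\open{K}$ is a cone with apex $0$ in its closure, the first inequality forces $\sclr{u}{z}\leq 0$ on $\open{K}$, hence on $K$, which means $u\in K^*=Q\times\{0\}^q$ and $c\geq 0$; moreover $\sclr{u}{z}\leq 0$ on all of $K$ forces (testing $z=e_i$, $i\leq p$, and $z=\pm e_j$, $j>p$) that $u\in Q\times\{0\}^q$. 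On the other hand $\sclr{u}{z}\geq c\geq 0$ for all $z\in m+F$, i.e. the affine support $m+F$ of $\mu$ lies in the half-space $\{\sclr{u}{\cdot}\geq 0\}$; combined with $\sclr{u}{m}\geq 0$ and the fact that $m\in Q\times\{0\}^q$ already gives $\sclr{u}{m}\geq0$, we get that the whole support of $\mu$ lies in $(-u)^-$ with $-u$\dots\ here one must be slightly careful about signs, but the upshot is that $\mu$ is supported in a half-space $v^-$ with $v\in Q\times\{0\}^q\setminus\{0\}$, contradicting (H'). So the main content of the first step is just a clean application of separation plus the identification $K^*=Q\times\{0\}^q$.

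For \eqref{eq:basic_geometry_extended}, fix $x\in K\cap\G$. By \eqref{eq:basic_geometry} there is a point $z_0\in(m+F)\cap\open{K}$; write $z_0=m+f_0$ with $f_0\in F$. The idea is to move from $x$ a small amount in the direction of $z_0-m=f_0$, staying inside the affine support and inside the open cone. Concretely, for $t>0$ consider the point
$$
x_t=x+t(z_0-m)+m=x+m+tf_0\in x+m+F.
$$
Wait --- I need a point of the form $x+m+f$ with $f\in F$, and indeed $tf_0\in F$, so $x_t\in x+m+F$ for every $t\geq0$. I must check two things: that $x_t\in\open{K}$ for suitable $t$, and that $\Vert x_t - x\Vert_J$ can be made $<1$, or rather that some such point has $J$-norm $<1$; but actually the statement asks for a point in $(x+m+F)\cap\open{K}\cap B_J(0,1)$, so I need $\Vert x_t\Vert_J<1$, not $\Vert x_t-x\Vert_J<1$. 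Let me reconsider: since $x\in K\cap\G$ and $m\in Q\times\{0\}^q$, for $i\notin J$ we have $m_i>0$, so the $i$-th coordinate of $x+m+tf_0$ is $\geq x_i+m_i+t(f_0)_i$, which for $t$ small is $\geq m_i/2>0$; for $i\in J$ we have $m_i=0$ so the $i$-th coordinate of $x_t$ is $x_i + t(f_0)_i$, and since the $J$-coordinates of $x$ that lie in $\inter{1}{p}$ are $\geq0$ while the $\RR^q$-coordinates are unconstrained in $\open K$, I get $\sclr{x_t}{e_i}\geq -|x_i|-t\Vert f_0\Vert$ --- this is a genuine issue because $x_i$ can be large for $i\in J\cap\inter{1}{p}$.

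The resolution must be more careful: the point is surely that one should first move $x$ back toward a canonical starting configuration, or restrict to the case $x$ in a bounded set, or exploit that $J\cap\inter{1}{p}$ interacts with $\G$ in a constrained way. Actually, re-reading the setup in the paper's Section~\ref{sec:favorable_case} where this lemma gets applied, the relevant $x$ in the application are precisely the ones living in $V$ with controlled $J$-coordinates, so the honest statement to prove is likely \eqref{eq:basic_geometry_extended} as written, and the trick is: take $z_0\in(m+F)\cap\open{K}$, and for $\lambda\in(0,1)$ consider $w_\lambda=(1-\lambda)(x+m-x)+\dots$ --- more cleanly, set $w=\lambda z_0+(1-\lambda)(\text{something in }(x+m+F)\cap K)$. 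Since $x\in K\cap\G$, the point $x+m$ belongs to $K$ (as $m\in K$ and $K+K\subset K$... but $\open K$? no). Hmm. The cleanest route: $x+m+F$ contains $x+z_0$ which lies in $K+\open K\subset\open K$ (sum of a point of a cone and an interior point of the cone is interior), so $(x+m+F)\cap\open K\ni x+z_0$ already. Then to also land in $B_J(0,1)$, I would take the segment from $x+z_0\in\open K$ toward a point whose $J$-norm is $0$: note the $J$-coordinates are not all free, but $F$ together with the fact that, by a rescaling/approximation, one can choose $z_0$ so that $x+z_0$ already has small $J$-norm --- this does not work if $x$ itself has large $J$-coordinates in $\inter{1}{p}$.

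Therefore the \textbf{main obstacle}, and the step I would spend the most care on, is precisely controlling the $J$-coordinates: I expect the correct argument is that for $i\in J\cap\inter{1}{p}$ one has $m_i=0$, and the constraint $x\in\G=\RR_+m+F$ forces the $i$-th coordinate of $x$ to be controlled by $F$, i.e. $x^{(J)}$ ranges only over (a translate of) $F^{(J)}$ up to the $\RR_+m$ part which contributes $0$ in $J$; combined with \eqref{eq:basic_geometry} one produces a point of $(m+F)\cap\open K$ whose $J$-part cancels $x^{(J)}$. In outline: by \eqref{eq:basic_geometry} and openness, the set $(m+F)\cap\open K$ is a nonempty \emph{open} subset of the affine space $m+F$; its image under the (linear) projection $z\mapsto z^{(J)}$ onto $\RR^{(J)}$ is therefore an open subset of $m^{(J)}+F^{(J)}=F^{(J)}$ (since $m^{(J)}=0$), hence contains a ball around some point $z_1^{(J)}$; since $x\in\G$ implies $x^{(J)}\in F^{(J)}$, one can pick $z_0\in(m+F)\cap\open K$ with $z_0^{(J)}$ as close as desired to $-x^{(J)}$ \emph{within $F^{(J)}$} --- wait, $-x^{(J)}$ need not lie in the open image. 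The fully correct version is: choose $z_0\in(m+F)\cap\open K$; then for $\lambda\in(0,1]$ small, $z_\lambda:=x+\lambda z_0+(1-\lambda)\cdot 0$ won't be in $x+m+F$. Let me instead take $z_\lambda := \lambda z_0 + (1-\lambda)(x+z_0) = x+z_0-\lambda x$; this lies in $x+m+F$ iff $\lambda x\in F$, not generally true. I will stop optimizing in this sketch: the robust plan is \textbf{(a)} prove \eqref{eq:basic_geometry} by separation as above; \textbf{(b)} observe $(m+F)\cap\open K$ is open and nonempty in $m+F$; \textbf{(c)} use that $x\in K\cap\G$ to write $x=sm+f$, $s\geq0$, $f\in F$, so $x^{(J)}=f^{(J)}\in F^{(J)}$; \textbf{(d)} by openness, translate within $m+F$ by an element of $F$ to simultaneously kill the $J$-part and stay in $\open K$ --- this last step uses that moving by $tf$ for small $t$ keeps us in the open cone (the $\notin J$ coordinates have strictly positive $m$-component and are perturbed only slightly) while allowing the $J$-part to be adjusted by the full subspace $F^{(J)}$, hence brought inside $B_J(0,1)$. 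Making step \textbf{(d)} rigorous --- reconciling ``stay in $\open K$'' (a small-perturbation requirement) with ``bring $\Vert\cdot\Vert_J$ below $1$'' (a possibly-large-adjustment requirement when $x^{(J)}$ is large) --- is the crux, and I suspect it is handled by the fact that the ``large'' $J$-coordinates of $x$ are exactly those with index in $\inter{1}{p}$, for which $\sclr{\cdot}{e_i}\geq0$ is the only constraint defining $K$, so \emph{decreasing} them (toward $0$, hence toward small $J$-norm) only helps us stay in $\open K$; the genuinely free $\RR^q$-coordinates ($i\in\inter{p+1}{d}\subset J$) impose no constraint at all. That monotonicity observation is what closes the proof.
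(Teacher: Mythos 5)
Your proof of the first assertion \eqref{eq:basic_geometry} is correct and takes a slightly different route from the paper: you separate $m+F$ from $\open{K}$ directly and use the cone structure of $\open{K}$ to force the separating functional into $-(Q\times\{0\}^q)$, whereas the paper first reduces to the case where $F$ is a hyperplane and then identifies the separating hyperplane explicitly. Both arguments are sound and of comparable length; modulo the sign bookkeeping you flag, your version is fine.

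The second assertion \eqref{eq:basic_geometry_extended} is where your proposal has a genuine gap, which you candidly acknowledge but do not close. Your closing ``monotonicity observation'' -- that decreasing the coordinates of $x$ indexed by $J\cap\inter{1}{p}$ only helps membership in $\open{K}$ -- does not resolve the difficulty, because the admissible perturbations are elements of $F$, not arbitrary coordinate directions: there is no reason that a coordinate-wise decrease of $x^{(J)}$ is realized by some $f\in F$, nor that an $f\in F$ with $f^{(J)}=-x^{(J)}$ (which exists, as you note, since $x^{(J)}=f_1^{(J)}\in F^{(J)}$) leaves the non-$J$ coordinates inside the open cone. The missing idea is to cancel the \emph{entire} $F$-component of $x$ rather than only its $J$-part. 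Write $x=\lambda_1 m+f_1$ with $\lambda_1\geq 0$ and $f_1\in F$ (this is exactly what $x\in\G=\RR_+m+F$ gives you), pick $f_0\in F$ with $m+f_0\in\open{K}$ from \eqref{eq:basic_geometry}, shrink it to $m+\alpha f_0\in\open{K}$ with $\alpha\in(0,1]$ so small that $\Vert\alpha f_0\Vert_J<1$ (possible by convexity of $K$, since $m\in K$), and set $f=\alpha f_0-f_1\in F$. Then
$$x+m+f=\lambda_1 m+(m+\alpha f_0)\in K+\open{K}\subset\open{K},$$
and since $m^{(J)}=0$ the $J$-projection of this point is exactly $\alpha f_0^{(J)}$, so $\Vert x+m+f\Vert_J=\Vert\alpha f_0\Vert_J<1$. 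This single choice of $f$ simultaneously handles the ``stay in $\open{K}$'' and ``small $J$-norm'' requirements that you were unable to reconcile: the potentially large part of $x$ that survives the cancellation is $\lambda_1 m$, which is harmless both for the cone (it lies in $K$) and for the $J$-norm (it vanishes on the $J$-coordinates).
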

\begin{proof} 
We assume $(m+F)\cap\open{K}=\emptyset$, and infer the existence of some $u\in Q\times\{0\}^q$ such that $(m+F)\subset u^{\perp}$, thus contradicting the assumption (H').

Let us first consider the case where $F=H$ is a hyperplane. Then there exists $u\not=0$ such that $H=u^\perp$.
We shall prove that $m\in u^{\perp}$. Suppose on the contrary that $\sclr{m}{u}\not=0$, then,
possibly changing $u$ to $-u$, we can assume that $\sclr{m}{u}> 0$. 
Now, using the homogeneity of $H$ and $\open{K}$, we see that $\open{K}$ does not intersect with
$$\bigcup_{\lambda>0}(\lambda m+H)=\bigcup_{\lambda>0}(\lambda u+u^{\perp})=\{x : \sclr{x}{u}>0\}.$$
Therefore, $\open{K}$ is included in $u^{-}$. But since $m\in K=\overline{(\open{K})}$ (this equality holds for any convex set with non-empty interior), we obtain that
$\sclr{m}{u}\leq 0$, which contradicts our hypothesis. Hence, $m$ belongs to $u^{\perp}$ and $m+H=u^{\perp}$. Finally, the non-intersecting hypothesis rewrites
$u^{\perp}\cap \open{K}=\emptyset$ and is easily seen to be equivalent to $u\in \pm Q\times\{0\}^q$.

We now turn to the general case where $F$ is any linear subspace. Since $m+F$ and $\open{K}$ are two disjoints convex sets, it follows from the Hyperplane separation theorem that there exists an affine hyperplane $H_m$ that separates $m+F$ and $\open{K}$. But, since $m$ belongs to both $m+F$ and $K=\overline{\open{K}}$, it must belong to $H_m$, and therefore $H_m=m+H$, where $H$ is a linear hyperplane. Now, $F$ being a linear subspace, it can't be on one side of $H$ unless it is contained in $H$.
Therefore, we obtain that $m+F\subset m+H$ and $(m+H)\cap \open{K}=\emptyset$, and equation \eqref{eq:basic_geometry} follows by applying the first part of the proof to $m+H$. 

Let us now show that \eqref{eq:basic_geometry} implies \eqref{eq:basic_geometry_extended}. Since $(m+F)\cap \open{K}$ is non-empty, there is some $f_0\in F$ such that $m+f_0\in\open{K}$. Therefore 
$$m+\alpha f_0\in\open{K}$$
for all $\alpha\in (0,1]$ (since $m\in K$ and $K$ is convex). Fix such an $\alpha$ so small that $\Vert \alpha f_0\Vert_J<1$.
For $x\in K\cap\G$, write $x=\lambda_1m+f_1$ with $\lambda_1\geq 0$ and $f_1$ in $F$, and set $f=\alpha f_0-f_1$. 
Then,
$$x+m+f=\lambda_1m+(m+\alpha f_0)\in \open{K}.$$
(since $\lambda_1m\in K$, $m+\alpha f_0\in\open{K}$ and $K+\open{K}\subset\open{K}$.)
In addition,
$$\Vert x+m+f\Vert_J=\Vert \alpha f_0\Vert_J<1,$$
thus proving that $x+m+f\in \open{K}\cap B_J(0,1)$.
\end{proof}

The second lemma is a technical tool. 
For any $x,y\in\RR^d$, we write $y\leq x$ iff $x-y\in\RR_+$.

\begin{lemma}
\label{lem:minimizing_sequence}
Let $(x_n)$ be a sequence in $K\cap\close{B_J(0,1)}\cap\G$. There exists a bounded sequence $(y_n)$ in $K\cap\close{B_J(0,1)}\cap\G$, such that
$$y_n\leq x_n\quad\mbox{ and }\quad y^{(J)}_n=x^{(J)}_n\quad \mbox{ for all }n.$$
\end{lemma}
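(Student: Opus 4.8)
The plan is to attach to each $x_n$ a canonical ``minimal'' candidate $y_n$ and then show that this canonical choice is automatically bounded, by a normalisation and compactness argument. For each $n$ I set
$$
R_n=\{z\in\G\cap K:\ z\leq x_n\ \text{and}\ z^{(J)}=x_n^{(J)}\}.
$$
This set contains $x_n$, so it is non-empty, and it is closed as an intersection of closed sets; hence it contains an element $y_n$ of minimal Euclidean norm (a minimising sequence is bounded, and its limit lies in $R_n$). Every $z\in R_n$ satisfies $\Vert z\Vert_J=\Vert z^{(J)}\Vert=\Vert x_n^{(J)}\Vert\leq 1$, so $R_n\subset K\cap\close{B_J(0,1)}\cap\G$; in particular $(y_n)$ lies in $K\cap\close{B_J(0,1)}\cap\G$, with $y_n\leq x_n$ and $y_n^{(J)}=x_n^{(J)}$. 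Thus everything reduces to proving that $(y_n)$ is bounded.

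Suppose it is not: along a subsequence, $\Vert y_n\Vert\to\infty$. Then $w_n=y_n/\Vert y_n\Vert$ has unit norm, and after a further extraction $w_n\to w$ with $\Vert w\Vert=1$. The limit $w$ has three properties. First, $w^{(J)}=0$, since $\Vert y_n^{(J)}\Vert=\Vert x_n^{(J)}\Vert\leq 1$ while $\Vert y_n\Vert\to\infty$. Second, $w\in\G\cap K$: the set $\G\cap K$ is a closed convex cone (recall that $\G=\RR_+m+F$ is itself a closed cone), so it contains every $w_n$, hence its limit. Third, $w\geq 0$ coordinatewise: $w\in K$ gives $w_i\geq 0$ for $i\leq p$, and $w^{(J)}=0$ together with the inclusion $\{p+1,\dots,d\}\subset J$ — which holds because $m\in Q\times\{0\}^q$ — gives $w_i=0$ for $i>p$.

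The heart of the argument is to check that $y_n-w\in R_n$ for all large $n$, which will contradict the minimality of $\Vert y_n\Vert$. Writing $y_n=\lambda_n m+f_n'$ with $\lambda_n\geq 0$ and $f_n'\in F$, a further extraction gives $\lambda_n/\Vert y_n\Vert\to a\geq 0$ and $f_n'/\Vert y_n\Vert\to f^{**}\in F$, so $w=am+f^{**}$ and $y_n-w=(\lambda_n-a)m+(f_n'-f^{**})$; this lies in $\G$ once $\lambda_n\geq a$, which is automatic if $a=0$ and holds for large $n$ if $a>0$ (then $\lambda_n\to\infty$). Since $w^{(J)}=0$ we get $(y_n-w)^{(J)}=x_n^{(J)}$; since $w\geq 0$ and $w_i=0$ on every coordinate frozen by $R_n$ (those in $J$, in particular all $i>p$), we get $y_n-w\leq y_n\leq x_n$; and $y_n-w\in K$ because for $i\leq p$ with $w_i=0$ nothing changes, while for $i\leq p$ with $w_i>0$ one has $(y_n)_i=\Vert y_n\Vert\,(w_n)_i\to+\infty$, so $(y_n)_i\geq w_i$ eventually. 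Hence $y_n-w\in R_n$; but
$$
\Vert y_n-w\Vert^2=\Vert y_n\Vert^2-2\Vert y_n\Vert\,\sclr{w_n}{w}+1,
$$
and $\sclr{w_n}{w}\to\Vert w\Vert^2=1$, so $\Vert y_n-w\Vert<\Vert y_n\Vert$ for large $n$: contradiction. Therefore $(y_n)$ is bounded, which proves the lemma.

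I expect the only delicate point to be the bookkeeping in the last paragraph: verifying that subtracting the single fixed vector $w$ leaves $y_n$ in $\G$, in $K$, and untouched on the $J$-coordinates, where the inclusion $\{p+1,\dots,d\}\subset J$ is exactly what guarantees that $w$ vanishes on every coordinate that $R_n$ keeps fixed. Minimising the \emph{Euclidean} norm of $y_n$ (rather than, say, $\sum_{i\notin J}(y_n)_i$) is what makes the final contradiction immediate, via $\sclr{w_n}{w}\to 1$.
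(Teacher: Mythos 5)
Your proof is correct, but it takes a genuinely different route from the paper. The paper writes $\G=\{x: L(x)=0,\ \phi(x)\geq 0\}$, freezes the $J$-coordinates, and views the remaining coordinates as a polyhedron $P=\{z\in\RR_+^{(I)}: L(z)=-L(x_n^{(J)}),\ \phi(z)\geq -\phi(x_n^{(J)})\}$; it then invokes a linear-programming result (developed in the Appendix) producing a minimal point $y_n^{(I)}\leq x_n^{(I)}$ of $P$ with the \emph{explicit} bound $\Vert y_n^{(I)}\Vert\leq M(\Vert L(x_n^{(J)})\Vert+\vert\phi(x_n^{(J)})\vert)$, where $M$ depends only on $L$ and $\phi$ -- uniform boundedness then follows from $\Vert x_n^{(J)}\Vert\leq 1$. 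You instead take the minimal-Euclidean-norm element $y_n$ of the feasible set $R_n$ and rule out $\Vert y_n\Vert\to\infty$ by a normalization--compactness argument: extract a limit direction $w$ with $\Vert w\Vert=1$, $w^{(J)}=0$, $w\in\G\cap K$, $w\geq 0$, and check that $y_n-w$ is again feasible with strictly smaller norm. All the verifications go through ($\G\cap K$ is indeed a closed convex cone; the inclusion $\inter{p+1}{d}\subset J$ is what forces $w\geq 0$; and $\sclr{w_n}{w}\to 1$ gives the strict decrease). The one point you should make explicit is the boundedness of $\lambda_n/\Vert y_n\Vert$ before extracting: the decomposition $y_n=\lambda_n m+f_n'$ is not unique when $m\in F$ (in which case $\G=F$ and you should simply take $\lambda_n=0$), and when $m\notin F$ the coefficient $\lambda_n$ is determined by the $F^\perp$-component of $y_n$, hence is $O(\Vert y_n\Vert)$; this is a one-line fix. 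The trade-off is clear: the paper's route is constructive and quantitative but requires the entire Appendix on minimal points of polyhedra, whereas yours is soft and non-constructive but self-contained and makes the Appendix unnecessary for this lemma.
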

\begin{proof}
Since $\G=\RR_+m+F$, where $F$ is a linear subspace of $\RR^d$, there exist $L\in M_d(\RR)$ and a linear form $\phi:\RR^d\to\RR$, such that
$$\G=\{x\in\RR^d : L(x)=0, \phi(x)\geq 0\}.$$
Recall that
$$\RR^{(J)}=\{x\in\RR^d : x_i=0,\forall i\notin J\}.$$
Let $\RR^{(I)}$ be its orthogonal complement, i.e.
$$\RR^{(I)}=\{x\in\RR^d : x_i=0,\forall i\in J\},$$
and write $x=x^{(I)}+x^{(J)}$ the orthogonal decomposition with respect to $\RR^{(I)}\oplus\RR^{(J)}$.
Let $x_n=x_n^{(I)}+x_n^{(J)}$ be an element of $K\cap\close{B_J(0,1)}\cap\G$ and define
$$P=\{z\in \RR_{+}^{(I)} : L(z)=-L(x_n^{(J)}), \phi(z)\geq -\phi(x_n^{(J)})\}.$$
For all $z\in \RR_{+}^{(I)}$, notice that $z\in P$ iff $z+x_n^{(J)}\in \G$. Therefore, $x_n^{(I)}\in P$.
It follows from Corollary~\ref{cor:polyhedron_2} that there exists $y_n^{(I)}\in P$ such that $y_n^{(I)}\leq x_n^{(I)}$ and
$$\Vert y_n^{(I)}\Vert\leq M(\Vert L(x_n^{(J)})\Vert+\vert\phi(x_n^{(J)})\vert),$$
where $M=M(L,\phi)$ only depends on $L$ and $\phi$ (and not on $x_n$). Setting $y_n=y_n^{(I)}+x_n^{(J)}$ thus gives a bounded sequence in 
$K\cap\close{B_J(0,1)}\cap\G$ that satisfies the conditions of the lemma.
\end{proof}

\subsubsection{Proof of Proposition \ref{prop:n_steps_push_forward}}

We begin with a lemma that asserts the existence of a time $b$ and a radius $R_0$ such that the random walk started at $x\in K\cap\G$ with $\Vert x\Vert_J\leq R_0$ will be  at a distance $\geq 1$ from the boundary of $K$ at time $b$ and still located in $\close{B_J(0,R_0)}$ with a probability that is bounded from below by some positive constant, uniformly in $x$. 

\begin{lemma}
\label{lem:one_step_push_forward}
There exist $b\geq 1$ and $R_0>0$ such that
$$
\inf_{x\in K\cap\close{B_J(0,R_0)}\cap\G}\PP_{\mu}^x\left(S_{b}\in K_{1}, \Vert S_b\Vert_J\leq R_0 \right)>0.
$$
\end{lemma}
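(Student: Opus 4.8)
The plan is to show that, starting from any point in $K\cap\close{B_J(0,R_0)}\cap\G$, a bounded number of increments suffices to push the walk to distance $\geq 1$ from $\partial K$ while keeping the $J$-coordinates bounded, with probability bounded below uniformly in the starting point. The geometric engine is Lemma~\ref{lem:support_vs_cone}: for every $x\in K\cap\G$ there is $f=f(x)\in F$ with $x+m+f\in\open{K}\cap B_J(0,1)$, i.e.\ one ``ideal'' increment (lying in the affine support $m+F$) moves the walk strictly inside $K$ without letting $\Vert\cdot\Vert_J$ exceed $1$. Since the walk started in $\G$ stays in $\G$, and $\G=\RR_+m+F$, I would iterate this: after $b$ ideal increments the displacement is $bm+(\text{something in }F)$, the drift term $bm$ has all coordinates in $I$ (the non-$J$ coordinates) nonnegative and, for $b$ large, strictly positive and large, so the walk lands in $K_1$; meanwhile the $J$-part stays controlled because each ideal increment only perturbs $\Vert\cdot\Vert_J$ by a bounded amount.

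The first concrete step is to fix $b$ and $R_0$. Choose $R_0=1$ (or any convenient constant) and pick $b$ large enough that $b\,m_i\geq 2$ for every $i\notin J$ with $m_i>0$; combined with the fact that the $F$-contributions to the non-$J$ coordinates can be absorbed (using $m+\alpha f_0\in\open{K}$ as in the proof of Lemma~\ref{lem:support_vs_cone}), this guarantees that a trajectory following $b$ near-ideal increments reaches $K_1$. The $J$-coordinates of $m$ vanish by definition of $J$, so the drift contributes nothing there, and one arranges each increment to keep $\Vert S_k\Vert_J$ below $R_0$. The second step is to convert ``ideal increment'' into a positive-probability event: since $m+F$ is the affine support of $\mu$, every neighbourhood (in $m+F$) of a point of $\mathrm{supp}\,\mu$ has positive $\mu$-mass, and by openness of $\open{K}\cap B_J(0,1)$ one can find, for each $x$, a small ball $B_x$ with $\mu(B_x)>0$ such that adding any increment from $B_x$ keeps the required inequalities; a compactness argument over the starting points (using Lemma~\ref{lem:minimizing_sequence} to replace the unbounded set of possible $x$'s by a bounded one with the same $J$-part, since only the $J$-part and the ``inside-ness'' matter) yields a single $\eta>0$ lower bound on all the relevant one-step probabilities. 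Multiplying $b$ such bounds gives the uniform positive constant.

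The main obstacle is the uniformity in $x$: the set $K\cap\close{B_J(0,R_0)}\cap\G$ is bounded only in the $J$-directions, so a naive compactness argument fails, and the ``ideal increment'' $f(x)$ could a priori blow up as $\Vert x^{(I)}\Vert\to\infty$. This is exactly what Lemma~\ref{lem:minimizing_sequence} is designed to neutralise: it lets one dominate any starting point $x_n$ by a bounded $y_n$ in the same set with identical $J$-coordinates, and since all the events in question are monotone under $\leq$ in the $I$-coordinates (a larger non-$J$ coordinate only helps the walk stay in $K$ and does not affect $\Vert\cdot\Vert_J$), it suffices to prove the bound for $x$ ranging over a \emph{bounded} subset, where ordinary compactness and lower semicontinuity of $x\mapsto\mu(B_x)$-type arguments apply. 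So the proof structure is: (i) reduce to bounded starting points via Lemma~\ref{lem:minimizing_sequence} and monotonicity; (ii) for a fixed bounded starting point, build a $b$-step positive-probability ``corridor'' landing in $K_1\cap\close{B_J(0,R_0)}$ using Lemma~\ref{lem:support_vs_cone} and positivity of $\mu$ near its affine support; (iii) extract a uniform constant by compactness. I would expect steps (ii)--(iii) to be routine once (i) is in place; the only delicate point is making sure the perturbed (genuinely random) increments, not just the idealized ones, still land the walk in the open cone with room to spare, which is handled by taking all the strict inequalities with a fixed margin.
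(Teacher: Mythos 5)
Your step (i) --- reducing to a bounded set of starting points via Lemma~\ref{lem:minimizing_sequence} together with the monotonicity of the event under $y\leq x$, $y^{(J)}=x^{(J)}$ --- is exactly what the paper does. The gap is in step (ii). Lemma~\ref{lem:support_vs_cone} produces $f=f(x)\in F$ with $x+m+f\in\open{K}\cap B_J(0,1)$, but $m+f$ is only a point of the \emph{affine hull} $m+F$ of $\mu$, not of its topological support; $\mu$ need not put any mass near it. So there is in general no ``small ball $B_x$ with $\mu(B_x)>0$'' whose increments realize the ideal displacement, and your corridor of near-ideal increments cannot be built one step at a time. Concretely, take $\mu$ uniform on $\{(2,-1),(-1,2),(-1,-1)\}$ in $\RR^2$ with $K=Q$: condition (H') holds, $m=0$, $F=\RR^2$, $J=\{1,2\}$, yet from $x=(1/2,1/2)$ every single increment exits $K$, while the ideal increment $f$ points into the open quadrant, far from all three support points. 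Turning ``$m+f$ lies in the affine hull'' into ``the walk reaches a neighbourhood of $x+bm+f'$ with positive probability for a suitable $f'$'' is a statement about the support of the convolution $\mu^{*b}$, and that is precisely the content that still needs proving; it is not routine, and the required $b$ and margin must moreover be uniform over the starting set.

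The paper resolves this by working at diffusive scale rather than with individual increments: it proves the inequality for starting points $x\sqrt{n}$ with $x$ ranging over the (reduced, bounded) set $K\cap\close{B_J(0,1)}\cap\G$ and with radius $R_0=\sqrt{n}$. Writing the probability in terms of the centered walk and applying the central limit theorem plus the Portmanteau theorem gives
$\liminf_n p_n\geq\PP\bigl(x+X\in(\open{K}-m)\cap B_J(0,1)\bigr)$ with $X\sim\N(0,\Gamma)$; the Gaussian has positive density on all of $x+F$, so it charges the open set that Lemma~\ref{lem:support_vs_cone} shows to be nonempty. The spreading of the walk over a region of diameter $\sqrt{n}$ in the $F$-directions is what substitutes for your unavailable ideal increments, and it is also why $R_0$ comes out as $\sqrt{n_0}$ and cannot simply be fixed equal to $1$ at the outset.
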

\begin{proof} Clearly, the lemma will follow from the existence  of an integer $n\geq 1$ such that
$$
\inf_{x\in K\cap\close{B_J(0,1)}\cap\G}\PP_{\mu}^{x\sqrt{n}}\left(S_{n}\in K_{1}, \Vert S_n\Vert_J\leq \sqrt{n}\right)>0.
$$
So, let us assume that this assertion is false. Then, we can find a sequence of points $x_n\in K\cap\close{B_J(0,1)}\cap\G$ such that
$$
p_n:=\PP_{\mu}^{x_n\sqrt{n}}\left(S_{n}\in K_{1}, \Vert S_n\Vert_J\leq \sqrt{n}\right)\to 0.
$$
Thanks to Lemma~\ref{lem:minimizing_sequence}, we can assume that $(x_n)$ is bounded, because for any sequence $(y_n)$
with the same properties as in this lemma, the probability $p_n$ where $x_n$ is replaced by $y_n$ is smaller than $p_n$, by inclusion of events. Furthermore, by extracting a subsequence, it can be assumed without loss of generality that $(x_n)$ converges to some element $x$ of the closed set $K\cap\close{B_J(0,1)}\cap\G$.

Now, let $\widetilde{S}_n=S_n-nm$ denote the centered random walk associated with $S_n$. Since $\Vert *\Vert_J$ is left invariant by a translation by $m$, the probability $p_n$ can be written as
$$
p_n=\PP_{\mu}^{0}\left(x_n\sqrt{n}+\widetilde{S}_{n}\in K_{1}-nm, \Vert x_n\sqrt{n}+\widetilde{S}_n\Vert_J\leq \sqrt{n}\right).
$$
Let $\eta>0$ be fixed. For all $n\geq 1/\eta$, holds the inclusion 
$$(K_{1}-nm)/\sqrt{n}\subset K_{\eta}-m.$$
Therefore, the probability $p_n$ is bounded from below by
$$\PP_{\mu}^{0}\left(x_n+\widetilde{S}_{n}/\sqrt{n}\in K_{\eta}-m, \Vert x_n+\widetilde{S}_n/\sqrt{n}\Vert_J\leq 1\right).$$
Since $x_n+\widetilde{S}_{n}/\sqrt{n}$ converges in distribution to $x+X$, where $X$ denotes a random variable with $\N(0,\Gamma)$ Gaussian distribution, we can use the Portmanteau theorem and let then $\eta\downarrow 0$ to get the lower bound
$$\liminf_{n\to\infty}p_n\geq \PP\left(x+X\in (\open{K}-m)\cap B_J(0,1)\right).$$
The random variable $x+X$ admits a positive density with respect to Lebesgue measure on the affine space $x+F$ where $F=(\ker\Gamma)^\perp$,
and
$$O:=(x+F)\cap(\open{K}-m)\cap B_J(0,1)$$
is an open subset of $x+F$. Thus it suffices to prove that $O$ is non-empty to obtain a contradiction. But this is precisely what asserts Lemma~\ref{lem:support_vs_cone}:
Indeed, since $x$ belongs to $K\cap \G$, we obtain that
$$(x+m+F)\cap\open{K}\cap B_J(0,1)\not=\emptyset.$$
Hence, substracting $m$ on both sides (recall that $\Vert *\Vert_J$ is left invariant by a translation by $m$) gives
$$O\not=\emptyset.$$
This implies
$$\PP(x+X\in O)>0,$$
thus contradicting our assumption that $\liminf p_n=0$. Therefore, the lemma is proven.
\end{proof}

By Lemma~\ref{lem:one_step_push_forward}, there exist $b\geq 1$, $R_0>0$ and $\gamma>0$, such that
\begin{equation}
\label{eq:push_and_back}
\PP_{\mu}^x\left(S_{b}\in K_{1}, \Vert S_b\Vert_J\leq R_0 \right)\geq 2\gamma,
\end{equation}
for all $x\in K\cap\close{B_J(0,R_0)}\cap\G$. Let us choose $\delta>0$ and $R\geq R_0$ such that
$$
\PP_{\mu}^0\left(\tau_{K_{-\delta}}>b,  \max_{k\leq b}\Vert S_k\Vert_J\leq R-R_0 \right)\geq 1-\gamma.
$$
Then, by inclusion of events, we also have
\begin{equation}
\label{eq:security}
\PP_{\mu}^x\left(\tau_{K_{-\delta}}>b,  \max_{k\leq b}\Vert S_k\Vert_J\leq R \right)\geq 1-\gamma,
\end{equation}
for all $x\in K\cap\close{B_J(0,R_0)}\cap\G$. Indeed, this follows from the relation $K+K_{-\delta}\subset K_{-\delta}$ and the triangle inequality for $\Vert *\Vert_J$.
Now, combining \eqref{eq:push_and_back} and \eqref{eq:security}, we obtain that
\begin{equation}
\label{eq:push_and_back_with_security}
\PP_{\mu}^x\left(\tau_{K_{-\delta}}>b, S_{b}\in K_{1}, \max_{k\leq b}\Vert S_k\Vert_J\leq R, \Vert S_b\Vert_J\leq R_0\right)\geq \gamma,
\end{equation}
for all $x\in K\cap\close{B_J(0,R_0)}\cap\G$.
Set
$$
p_\ell(x)=\PP_{\mu}^x\left(\tau_{K_{-\delta}}>b\ell, S_{b\ell}\in K_{\ell}, \max_{k\leq b\ell}\Vert S_k\Vert_J\leq R \right).
$$
Notice that $K_1+K_\ell\subset K_{\ell+1}$. Hence, 
if we consider only trajectories such that $S_b\in K_1$ and $\Vert S_b\Vert_J\leq R_0$, and then use the Markov property at time $b$,
we get the lower bound 
$$
p_{\ell+1}(x)\geq \gamma \times\inf_{y\in K\cap\close{B_J(0,R_0)}\cap\G}p_\ell(y),
$$
for all $x\in K\cap\close{B_J(0,R_0)}\cap\G$. This proves that
$p_\ell(0)\geq \gamma^\ell$
for all $\ell\geq 1$, and Proposition~\ref{prop:n_steps_push_forward} follows by inclusion of events since $K_\delta+K_{-\delta}\subset K$ and $K_\delta+K_\ell\subset K_\ell$.

\subsection{On the exit time from a ball}
\label{subsec:exit_time_ball}
This section is devoted to the proof of Proposition~\ref{prop:path_staying_in_a_ball}.
In what follows, the abbreviation ``f.s.'' stands for ``for some''.

\subsubsection{Preliminary estimate for Brownian motion}

Let $(B_t)$ denotes a true $d$-dimensional Brownian motion, i.e. the image of a standard $d$-dimensional Brownian motion (meaning a collection of $d$ independent one-dimensional Brownian motions) by an invertible linear transformation.

\begin{lemma} 
\label{lem:brownian_motion_estimate}
For every $\epsilon>0$, there exist $\delta>0$ and $0<\alpha<1$ such that
$$\PP^x\left(\Vert B_t\Vert< 1-\delta\mbox{ f.s. }t\in[\alpha,1]\right)\geq 1-\epsilon$$
for all $x\in\close{B(0,1)}$.
\end{lemma}

\begin{proof} 
Suppose on the contrary that there is some $\epsilon_0>0$ for which we can pick $\alpha_n\downarrow 0$, $\delta_n\downarrow 0$ and $x_n\in \close{B(0,1)}$ such that
$$
p_n:=\PP^{x_n}\left(\Vert B_t\Vert< 1-\delta_n\mbox{ f.s. }t\in[\alpha_n,1]\right)< 1-\epsilon_0
$$
for all $n$. By compactness, it can also be assumed that $\Vert x_n-x\Vert\to 0$ for some $x\in\close{B(0,1)}$.
Now, for any $\eta>0$, we have
$$
p_n\geq\PP^x\left(\Vert B_t\Vert< 1-\eta\mbox{ f.s. }t\in[\alpha_n,1]\right)
$$
as soon as $\Vert x_n-x\Vert+\delta_n\leq \eta$. 
Hence, taking the limit on both sides and letting $\eta\downarrow 0$ gives
$$\liminf_{n\to\infty} p_n\geq\PP^x\left(\Vert B_t\Vert< 1\mbox{ f.s. }t\in(0,1]\right).$$

But it follows from the classical cone condition (as found in \cite[Proposition 3.3]{PorSto78} for example) applied to the ball $B(0,1)$ that
$x$ is regular for $B(0,1)$, i.e.~$B_t$ immediately visits $B(0,1)$ with full probability.
Therefore, the last inequality reads
$$\liminf_{n\to\infty} p_n\geq 1$$
and contradicts our assumption.

\end{proof}

\subsubsection{Application to random walks}

In this subsection, $(S_n)\in \RR^d$ is a square integrable random walk with increments distribution $\mu$, mean $m=0$ and any covariance matrix $\Gamma$.

The proof of Proposition \ref{prop:path_staying_in_a_ball} is based on the following basic idea. Given $\epsilon>0$, find $R>0$ and a time $n_0\geq 1$ such that the random walk started at $0$ returns to $0$ at time $n_0$ without leaving the ball $B(0,R)$ with probability $\geq 1-\epsilon$. If this can be done, then the result follows by concatenation (i.e~Markov property). But this is asking for a property stronger than recurrence, thus we can not hope for such a simple argument. Instead of a return to $0$, we can ask for a return in some ball $B(0,R_0)$, with $R_0\leq R$. But then, in view of using concatenation, we need the previous probability to be greater than $1-\epsilon$ uniformly for any starting point in the same ball $B(0,R_0)$.
Lemma~\ref{lem:pseudo-recurrent_path_with_size_control} below provides a result in this spirit that is sufficient for our purpose.

\begin{lemma}
\label{lem:pseudo-recurrent_path_with_size_control} 
Suppose (here only) that the covariance matrix $\Gamma$ is non-degenerate.
Then, for every $\epsilon>0$, there exist $0<R_0<R$ and $1\leq \ell_0\leq n_0$ such that
$$
\PP^{x}_{\mu}\left(\max_{k\leq n_0} \Vert S_n\Vert\leq R \mbox{ and } \Vert S_k\Vert \leq R_0 \mbox{ f.s. } k\in\inter{\ell_0}{n_0}\right)\geq 1-\epsilon
$$
for all $x\in \close{B(0,R_0)}$.
\end{lemma}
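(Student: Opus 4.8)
\textbf{Proof plan for Lemma~\ref{lem:pseudo-recurrent_path_with_size_control}.}
The plan is to build the desired trajectory as a concatenation of two pieces: a short ``deterministic-scale'' piece governed by the functional central limit theorem (Donsker), followed by a ``pseudo-recurrence'' piece that brings the walk back into a small ball and keeps it there for a while. Since $\Gamma$ is non-degenerate, $(S_{\lfloor nt\rfloor}/\sqrt n)_{t\in[0,1]}$ converges in distribution to a true $d$-dimensional Brownian motion $(B_t)$, and Lemma~\ref{lem:brownian_motion_estimate} applies to $B$. First I would fix $\epsilon>0$ and apply Lemma~\ref{lem:brownian_motion_estimate} with $\epsilon/2$ in place of $\epsilon$ to obtain $\delta>0$ and $\alpha\in(0,1)$ such that, starting from any $x\in\close{B(0,1)}$, the Brownian motion satisfies $\Vert B_t\Vert<1-\delta$ for some $t\in[\alpha,1]$ with probability $\geq 1-\epsilon/2$.

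Next I would transfer this to the random walk via Donsker's theorem. Working at scale $\sqrt{N}$ for a large parameter $N$, the rescaled walk started from $x_N=x\sqrt N$ with $\Vert x\Vert\leq 1$ stays in $B(0,\sqrt N)$ up to time $N$ and enters $B(0,(1-\delta/2)\sqrt N)$ at some time in $\inter{\lfloor\alpha N\rfloor}{N}$ with probability $\geq 1-\epsilon$ for $N$ large, by the Portmanteau theorem applied to the open set $\{$paths that exit $\close{B(0,1)}$ before time $1\}^c\cap\{$paths hitting $\open{B(0,1-\delta)}$ during $[\alpha,1]\}$ — both events being Brownian-continuity sets after a slight enlargement. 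Set $R=\sqrt N$, $R_0=(1-\delta/2)\sqrt N$, $n_0=N$; there remains the issue that, at the first time the walk enters $\close{B(0,R_0)}$, it might still leave that ball afterwards. To handle this, I would not ask for a single entrance but rather re-run the argument: once inside $\close{B(0,R_0)}$, rescale again (the ratio $R_0/R=1-\delta/2<1$ is the key gain), and iterate, so that after finitely many blocks the walk is confined to $\close{B(0,R_0)}$ for all times in $\inter{\ell_0}{n_0}$ while never leaving $\close{B(0,R)}$; here $\ell_0$ is the (fixed, $x$-independent) total length of the initial blocks and one absorbs the small failure probabilities of the geometric-type iteration into $\epsilon$.

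Alternatively — and this is cleaner — I would phrase the single application of Lemma~\ref{lem:brownian_motion_estimate} so that it already gives confinement on an interval: the event $\{\Vert B_t\Vert<1-\delta$ for \emph{all} $t\in[\alpha,1]\}$ is exactly what that lemma (as stated, with ``f.s.'' meaning ``for some'') does \emph{not} give, so the honest route is the iteration above, or else one strengthens the Brownian estimate to an interval statement first. I expect the main obstacle to be precisely this gap between ``the rescaled walk visits a smaller ball at some time'' and ``the walk is trapped in the smaller ball over a whole time-interval $\inter{\ell_0}{n_0}$'': bridging it requires either a self-improving concatenation with geometrically decaying error, or invoking the regularity/cone argument of Lemma~\ref{lem:brownian_motion_estimate} in the form ``$\Vert B_t\Vert<1$ for all $t\in(0,1]$ with full probability from any boundary point'', which lets one take $\alpha\to 0$ and thereby make $\ell_0$ as small as one wishes relative to $n_0$. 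Once the confinement-on-an-interval statement is in hand at Brownian scale, the transfer to $(S_n)$ via Donsker and Portmanteau, the choice $R_0<R$, and the identification of $\ell_0\leq n_0$ are routine, and the degeneracy hypothesis on $\Gamma$ is used only to guarantee that the limiting Brownian motion is genuinely $d$-dimensional so that Lemma~\ref{lem:brownian_motion_estimate} is applicable.
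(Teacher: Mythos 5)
There are two genuine problems with your plan, both traceable to the architecture rather than to the ingredients (Lemma~\ref{lem:brownian_motion_estimate}, Donsker, Portmanteau, and a compactness argument for uniformity are indeed the right tools, and the paper uses exactly these).

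First, you have misread the target event. In the paper ``f.s.'' abbreviates ``for some'', so the lemma only asks that the walk visit $\close{B(0,R_0)}$ at \emph{one} time $k\in\inter{\ell_0}{n_0}$, not that it be trapped in $\close{B(0,R_0)}$ throughout that interval. The ``main obstacle'' you identify --- bridging the gap between a single visit and confinement over a whole interval --- does not exist, and the iteration you propose to overcome it is both unnecessary and doomed at the $1-\epsilon$ level: with $R_0\asymp\sqrt{n_0}$, confinement in $\close{B(0,R_0)}$ over a time window of length comparable to $n_0$ has probability converging to a Brownian confinement probability strictly less than $1$, so no finite concatenation of such blocks can reach $1-\epsilon$ for small $\epsilon$. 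The single-visit statement is exactly what Lemma~\ref{lem:brownian_motion_estimate} plus Donsker delivers (after handling the discretization $t=k/n$, which the paper does by intersecting with the event that all increments are $\leq(\delta/2)\sqrt n$), and it is all that the proof of Proposition~\ref{prop:path_staying_in_a_ball} needs, since there one stops at the hitting time $H$ of $\close{B(0,R_0)}$ and restarts by the strong Markov property.

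Second, your choice $R=\sqrt N$, $R_0=(1-\delta/2)\sqrt N$ ties the confinement radius to the Donsker scale, and then the event $\{\max_{k\leq n_0}\Vert S_k\Vert\leq R\}$ cannot have probability $\geq 1-\epsilon$ uniformly over starting points in $\close{B(0,R_0)}$: in the scaling limit it becomes the probability that a Brownian motion started at distance $1-\delta/2$ from the origin stays in the unit ball up to time $1$, which is a constant strictly below $1$. The paper decouples the two radii: it first runs the Donsker argument to fix $n_0$, $\ell_0=[\beta n_0]$ and $R_0=\sqrt{n_0}$ (giving the ``visit $\close{B(0,R_0)}$ at some $k\in\inter{\ell_0}{n_0}$'' event with probability $\geq 1-2\epsilon$), and only \emph{then} chooses $R\geq R_0$ so large that $\PP^0_\mu\bigl(\max_{k\leq n_0}\Vert S_k\Vert\leq R-R_0\bigr)\geq 1-\epsilon$ --- a triviality for the now-fixed finite horizon $n_0$ --- and concludes by a union bound. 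Your write-up needs both corrections: drop the iteration, and choose $R$ after $n_0$ rather than at the same $\sqrt N$ scale.
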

\begin{proof}
Let $\epsilon>0$ be given and fix $\delta>0$ and $0<\alpha<1$ so that the conclusion of Lemma \ref{lem:brownian_motion_estimate} holds.
Also, fix a parameter $\beta\in(0,\alpha)$.

Let $(x_n)$ be a sequence in $\close{B(0,1)}$ that converges to some $x\in\close{B(0,1)}$, and set
$$
p_n=\PP_{\mu}^{x_n\sqrt{n}}(\Vert S_k\Vert\leq\sqrt{n}\mbox{ f.s. } k\in [\beta n, n])
$$
We shall prove that
\begin{equation}
\liminf_{n\to\infty}p_n\geq 1-\epsilon.
\end{equation}
To do this, we consider the process with continuous path $(Z_n(t), t\in[0,1])$ defined by
$$
Z_n(t)=\frac{S_{[nt]}}{\sqrt{n}}+(nt-[nt])\frac{\xi_{[nt]+1}}{\sqrt{n}},
$$
where $\xi_k=S_k-S_{k-1}$ and $[a]$ denotes the integer part of $a$.  The probability $p_n$ can then be written in terms of the process $Z_n$ as $\PP_{\mu}^0(A_n)$, where
$$
A_n=\left\{\Vert x_n+Z_n(t)\Vert\leq 1\mbox{ f.s. } t=k/n\in[\beta,1]\right\}.
$$
We wish to use the Functional Central Limit Theorem \cite[Theorem 10.1]{Bil68} together with the Portmanteau theorem \cite[Theorem 2.1]{Bil68} in order to obtain a lower bound for the probability of this event, but
the condition that $t$ be rationnal ($t=k/n$) can not be handled directly, and must therefore be relaxed. To this end, we define
$$
\widetilde{A_n}=\left\{\Vert x+Z_n(t)\Vert< 1-\delta\mbox{ f.s. } t\in[\alpha,1]\right\}
$$
and
$$
B_n=\left\{\max_{k=0\ldots n}\Vert \xi_k\Vert > (\delta/2)\sqrt{n}\right\}.
$$
Since $\Vert Z_n(t)- Z_n(k/n)\Vert \leq \Vert \xi_{k+1}\Vert/\sqrt{n}$ for $k/n\leq t<(k+1)/n$, we have the inclusion of events 
$$
\widetilde{A_n}\cap \close{B_n}\subset A_n
$$
as soon as $\Vert x_n-x\Vert\leq \delta/2$ and $\alpha-\beta>1/n$.
Thus, for all sufficiently large $n$, we have
$$
p_n\geq \PP^{0}_{\mu}(\widetilde{A_n})-\PP^0_{\mu}(B_n).
$$
Furthermore, it is a basic result in probability theory that $\PP(B_n)\to 0$ for any i.i.d. sequence of zero-mean square integrable random variables $(\xi_k)$.
Hence, we are left to prove that
$$
\liminf_{n\to\infty}\PP^0_{\mu}(\widetilde{A_n})\geq 1-\epsilon.
$$
Now, since $\Gamma$ is non-degenerate, the Functional Central Limit Theorem asserts that $Z_n$ converges in distribution to a true $d$-dimensional Brownian motion (with covariance matrix $\Gamma$) on the space of continuous functions $w:[0,1]\to \RR^d$ equipped with the topology of uniform convergence.
Since the set of continuous functions $w:[0,1]\to \RR^d$ such that $\Vert w(t)\Vert< 1-\delta$ for some $t\in[\alpha,1]$ is open with respect to the topology of uniform convergence, it follows from the Portmanteau theorem that
$$
\liminf_{n\to\infty} \PP^0_{\mu}(\widetilde{A_n})\geq \PP^0\left(\Vert x+B_t\Vert< 1-\delta\mbox{ f.s. }t\in[\alpha,1]\right)\geq 1-\epsilon,
$$
where the lower bound $1-\epsilon$ comes from our choice of $\delta$ and $\alpha$.
This proves our first claim that $\liminf_n p_n\geq 1-\epsilon$. By a standard compactness argument, this immediately implies that
$$
\inf_{x\in \close{B(0,1)}}\PP_{\mu}^{x\sqrt{n}}(\Vert S_k\Vert\leq\sqrt{n}\mbox{ f.s. } k\in[\beta n,n])\geq 1-2\epsilon
$$
for all sufficiently large $n$. Fix such $n_0>1/\beta$ and set $\ell_0=[\beta n_0]$ and $R_0=\sqrt{n_0}$. Then we have $0<\ell_0<n_0$, and the last inequality can be rewritten as
$$
\inf_{x\in \close{B(0,R_0)}}\PP^{x}_{\mu}\left(\Vert S_k\Vert \leq R_0 \mbox{ f.s. } k\in\inter{\ell_0}{n_0}\right)\geq 1-2\epsilon
$$
In order to complete the proof, it suffices to notice that, for all $x\in \close{B(0,1)}$ and $R\geq R_0$,  we have
$$
\PP_{\mu}^x\left(\max_{k\leq n_0}\Vert S_k\Vert\leq R\right)\geq\PP_{\mu}^0\left(\max_{k\leq n_0}\Vert S_k\Vert\leq R-R_0\right).
$$
Since the last probability goes to $1$ as $R\to\infty$, it is bounded from below by $1-\epsilon$ for all sufficiently large $R$, and we conclude that for such a choice of $R$, we have
$$\PP^{x}_{\mu}\left(\max_{k\leq n_0}\Vert S_k\Vert\leq R\mbox{ and }\Vert S_k\Vert \leq R_0 \mbox{ f.s. } k\in\inter{\ell_0}{n_0}\right)\geq 1-3\epsilon$$
for all $x\in \close{B(0,R_0)}$.
\end{proof}

\subsubsection{Proof of Proposition~\ref{prop:path_staying_in_a_ball}}
First of all, we notice that the variance-covariance matrix $\Gamma$ can be assumed to be non-degenerate. Otherwise, the random walk lives on $(\ker\Gamma)^{\perp}$ where it has a non-degenerate variance-covariance matrix.
Since the projection of a $d$-dimensional ball is still a ball in $(\ker\Gamma)^{\perp}$, the result will follow by application
of the non-degenerate case to the projected random walk. 

As explained earlier, the idea is now to concatenate the ``high-probability path'' given by Lemma~\ref{lem:pseudo-recurrent_path_with_size_control}.
Let $\epsilon>0$ be given. Thanks to Lemma \ref{lem:pseudo-recurrent_path_with_size_control}, we can find  
$\delta>0$, $0<R_0<R$ and $1\leq \ell_0\leq n_0$ such that
$$\inf_{x\in \close{B(0,R_0)}}\PP_{\mu}^x\left(\max_{k\leq n_0}\Vert S_n\Vert\leq R, H\leq n_0\right)\geq 1-\epsilon,$$
where $H$ denotes the first hitting time of the ball $\close{B(0,R_0)}$ after time $\ell_0$.

For $x\in \close{B(0,R_0)}$ and $n\geq n_0$, we use the strong Markov property at time $H$ to get
\begin{align*}
\PP^{x}_{\mu}&\left(\max_{k\leq n}\Vert S_k\Vert\leq R\right)\\
& \geq \PP^{x}_{\mu}\left(\max_{k\leq n}\Vert S_k\Vert\leq R, H\leq n_0\right)\\
     & \geq \EE_{\mu}^{x}\left(\max_{k\leq H}\Vert S_k\Vert\leq R, H\leq n_0,\PP_{\mu}^{S_H}\left(\max_{k\leq n-j}\Vert S_k\Vert\leq R\right)_{| j=H}\right)\\
     & \geq \PP^x_{\mu}\left(\max_{k\leq H}\Vert S_k\Vert\leq R, H\leq n_0\right)\times 
		\inf_{y\in \close{B(0,R_0)}}\PP_{\mu}^y\left(\max_{k\leq n-\ell_0}\Vert S_k\Vert\leq R\right).
\end{align*}
Thus  
$$
R(n):=\inf_{x\in \close{B(0,R_0)}}\PP^{x}_{\mu}\left(\max_{k\leq n}\Vert S_k\Vert\leq R\right)
$$ 
satisfies the inequality
$$
R(n)\geq (1-\epsilon)R(n-\ell_0)
$$
for all $n\geq n_0$. Since $R(n)$ is clearly decreasing, for $n_0+k\ell_0\leq n< n_0+(k+1)\ell_0$, we obtain
$$
R(n)\geq R(n_0+(k+1)\ell_0)\geq (1-\epsilon)^{k+1}R(n_0)\geq (1-\epsilon)^{n+2}.
$$
Hence,
$$
\liminf_{n\to\infty} R(n)^{1/n}\geq 1-\epsilon
$$
and this proves the proposition.

\appendix
\section{Minimal points of a polyhedron}

This independent and (nearly) self-contained section provides the material we need for the proof of Lemma~\ref{lem:minimizing_sequence}. The notion of minimality introduced here and the related results are certainly not new, but we were not able to find any reference for them.
The arguments developed here are highly inspired by the standard ideas of linear programming theory, as can be found in the book
\cite{Cia82} for example.

Let $1\leq m\leq n$. We consider here the polyhedron
$$P=\{x\in\RR_+^n : \sum_{i=1}^n x_iC_i=b\},$$
where $C_i, b$ are any vectors of $\RR^m$, and $x_i$ denote the coordinates of $x$ in the standard basis.

If $P$ is not empty (a condition that we assume from now on), it is a closed convex set. Its extremal points are called the {\em vertices} of $P$. Note that $0$ is a vertex iff $b=0$. There exist a simple and well known characterization of vertices. To $x\in P$, let us associate the subset $I(x)$ of indices $i$ such that $x_i>0$. Then $x\not=0$ is a vertex of $P$ iff the vectors $\{C_i, i\in I(x)\}$ are linearly independent (see \cite[Th\'eor\`eme 10.3-1]{Cia82}).
This proves that there is only a finite number of vertices. The fact that there always exist (at least) a vertex is proved in
\cite[Th\'eor\`eme 10.3-3]{Cia82} for example, and will follow as a by-product of our analysis.

For $x,y\in \RR^n$, we write $y\leq x$ iff $x-y\in\RR_+^n$ and $y<x$ iff $y\leq x$ and $y\not=x$. Hence $x>0$ means that $x\geq 0$ and one of its coordinates (at least) is $>0$.
We shall say that $x\in P$ is {\em minimal} if $P$ does not contain any $y<x$.

Let us begin with a useful characterization of minimality:
\begin{lemma} 
An element $x\not=0$ of $P$ is minimal iff the vectors $\{C_i, i\in I(x)\}$ are positively independent, that is:
$$\sum_{i\in I(x)}u_iC_i=0\mbox{ and } u_i\geq 0 \mbox{ for all }i\in I(x)\Rightarrow u_i=0\mbox{ for all }i\in I(x).$$
\end{lemma}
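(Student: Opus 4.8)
The plan is to prove both implications of the equivalence by relating the existence of an element $y<x$ in $P$ to the existence of a nontrivial nonnegative linear relation among the vectors $\{C_i, i\in I(x)\}$.

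\medskip

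For the "if" direction (positive independence implies minimality), I would argue by contraposition. Suppose $x$ is not minimal, so there exists $y\in P$ with $y<x$. Set $u=x-y$. Then $u\geq 0$, $u\neq 0$, and since both $x$ and $y$ satisfy the constraint $\sum_i x_iC_i=b=\sum_i y_iC_i$, subtracting gives $\sum_{i=1}^n u_iC_i=0$. Moreover, wherever $x_i=0$ we have $0\leq y_i\leq x_i=0$, hence $y_i=0$ and $u_i=0$; so the support of $u$ is contained in $I(x)$, i.e.\ $\sum_{i\in I(x)}u_iC_i=0$ with all $u_i\geq 0$ and not all zero. This contradicts positive independence of $\{C_i, i\in I(x)\}$.

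\medskip

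For the "only if" direction (minimality implies positive independence), again by contraposition: suppose $\{C_i, i\in I(x)\}$ are not positively independent, so there exist coefficients $u_i\geq 0$, $i\in I(x)$, not all zero, with $\sum_{i\in I(x)}u_iC_i=0$. Extend $u$ to all of $\RR^n$ by setting $u_i=0$ for $i\notin I(x)$. Now consider $y(t)=x-tu$ for $t\geq 0$. For every $t$ we have $\sum_i y(t)_iC_i = b - t\sum_i u_iC_i = b$, so $y(t)$ satisfies the linear constraint. For the coordinates $i\notin I(x)$, $y(t)_i=x_i=0\geq 0$; for $i\in I(x)$ with $u_i=0$, $y(t)_i=x_i>0$; for $i\in I(x)$ with $u_i>0$, $y(t)_i=x_i-tu_i$ is positive for small $t>0$ and decreasing in $t$. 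Since $u\neq 0$ has at least one strictly positive coordinate $i_0\in I(x)$ (with $x_{i_0}>0$), choosing $t=t^*:=\min\{x_i/u_i : i\in I(x),\ u_i>0\}>0$ makes $y(t^*)\in\RR_+^n$, hence $y(t^*)\in P$, while $y(t^*)<x$ (it is $\leq x$ and differs from $x$ in coordinate $i_0$). Thus $x$ is not minimal.

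\medskip

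I expect no serious obstacle here: the statement is essentially a bookkeeping translation of "minimality" into the language of the cone generated by the active $C_i$'s, and both directions are short once one takes differences $x-y$ (resp.\ moves along the ray $x-tu$). The only mild care needed is handling the coordinates outside $I(x)$ correctly — checking that the perturbation keeps them at zero and that the support inclusion holds — and making sure the scaling parameter $t^*$ is chosen so that nonnegativity is preserved while strict decrease occurs in at least one coordinate. The special case $x=0$ is excluded by hypothesis, which is consistent with the fact that $I(0)=\emptyset$ makes the positive-independence condition vacuous.
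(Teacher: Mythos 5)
Your proof is correct and follows essentially the same route as the paper's: the ``if'' direction by taking $u=x-y$ and observing its support lies in $I(x)$, and the ``only if'' direction by moving along the ray $x-tu$ for a suitable $t>0$ (the paper simply takes $t$ small enough rather than the extremal $t^*$, but this is immaterial). No gaps.
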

\begin{proof} 
Let $x$ be an element of $P$. Assume first that $x$ is not minimal. Then there exist $y\in P$ such that $y<x$.
If $i\not\in I(x)$, then $y_i=x_i=0$ (since $0\leq y_i\leq x_i=0$). Thus, there is at least one $i\in I(x)$ such that $y_i<x_i$.

Set $u_i=x_i-y_i$ for $i\in I(x)$. Then, the $u_i$'s are $\geq 0$ and at least one of them is $>0$.
Furthermore, 
$$\sum_{i\in I(x)}u_iC_i=\sum_{i\in I(x)}x_iC_i-\sum_{i\in I(x)}y_iC_i=\sum_{i=1}^nx_iC_i-\sum_{i=1}^ny_iC_i=b-b=0.$$
Hence, the vectors $\{C_i, i\in I(x)\}$ are not positively independent.

Conversely, suppose that there exist non-negative numbers $u_i,i\in I(x)$, not all zero, such that
$$\sum_{i\in I(x)}u_i C_i=0.$$
For $i\not\in I(x)$, set $u_i=0$, and let $u\in \RR_+^n$ denote the vector with coordinates $u_i$. Since one of the $u_i$'s is $>0$, we have $y=x-tu<x$ for any $t>0$. But, if $t>0$ is small enough, then $y$ also belongs to $\RR_+^n$ (this follows from the fact that $x_i=0\Rightarrow u_i=0$). Furthermore,
$$\sum_{i=1}^ny_iC_i=\sum_{i\in I(x)}(x-tu)=\sum_{i\in I(x)}x_iC_i+t\sum_{i\in I(x)}u_iC_i=b.$$
Hence, we find some $y\in P$ such that $y<x$, i.e. $x$ is not minimal.
\end{proof}
From this lemma, it should be clear that any vertex of $P$ is minimal. Let us denote by $P_m$ the set of minimal points of $P$.
The main result of this section is the following:

\begin{proposition}
\label{prop:polyhedron_1}
Assume $P\not=\emptyset$. Then,
\begin{enumerate}
\item For any $x\in P$, there exist $y\in P_m$ such that $y\leq x$;
\item Every $y\in P_m$ is a convex combination of the vertices of $P$;
\item There exists a number $M$, only depending on the $C_i$'s, such that
$P_m$ is bounded by $M\Vert b\Vert$.
\end{enumerate}
\end{proposition}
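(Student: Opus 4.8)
The plan is to prove the three items in order, using the minimality characterization from the previous lemma as the main tool, and deducing the finiteness and boundedness statements from a careful inspection of the linear-algebraic content.

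\medskip

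\textbf{Item (1).} First I would show that starting from any $x\in P$, a descent procedure terminates at a minimal point below $x$. If $x$ is already minimal we are done. Otherwise, by the characterization lemma, the vectors $\{C_i, i\in I(x)\}$ are not positively independent, so there exist $u_i\geq 0$, not all zero, with $\sum_{i\in I(x)} u_iC_i = 0$; extending $u$ by zero outside $I(x)$, the ray $x - tu$ ($t\geq 0$) stays in the affine slice $\{\sum y_iC_i=b\}$, and for $t$ small it stays in $\RR_+^n$. Increase $t$ until some coordinate $x_i - tu_i$ first hits $0$ (this happens at a finite $t^\ast>0$ because some $u_i>0$ with $i\in I(x)$, hence $x_i>0$); the resulting point $x'$ lies in $P$, satisfies $x'<x$, and has $|I(x')| < |I(x)|$. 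Since the support size strictly decreases, after finitely many steps we reach a minimal $y\leq x$. The degenerate case $x=0$ forces $b=0$ and $0$ is already minimal.

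\medskip

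\textbf{Item (2).} I would argue that a minimal $y\neq 0$ is a convex combination of vertices. Consider the face $F = \{z\in P : z_i = 0 \text{ for all } i\notin I(y)\}$; this is a polyhedron (a slice of $P$ by the coordinate subspace indexed by $I(y)$) containing $y$ in its relative interior with respect to the coordinates in $I(y)$, in the sense that $y_i>0$ for all $i\in I(y)$. I claim $F$ is in fact a single point, namely $\{y\}$: if $z\in F$ with $z\neq y$, then $w = y - z$ satisfies $\sum_{i\in I(y)} w_i C_i = 0$; since for small $t>0$ both $y \pm tw$ lie in $\RR_+^n$ (using $y_i>0$ on the support), at least one of $y+tw$, $y-tw$ is $<y$ and lies in $P$, contradicting minimality unless $w=0$. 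Hence $y$ is itself a vertex of $P$ (its own unique representation), and in particular a convex combination of vertices of $P$ trivially. This also re-proves, via item (1) applied to any point of $P$, that vertices exist.

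\medskip

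\textbf{Item (3).} Finally I would bound $P_m$. Combining the argument in item (2), every minimal point is a vertex, and there are only finitely many vertices (a vertex $x\neq 0$ is determined by a linearly independent subset $\{C_i, i\in I(x)\}$ together with the unique solution of $\sum_{i\in I(x)} x_i C_i = b$, and there are finitely many such subsets). For each such subset $S$, Cramer's rule (or the pseudo-inverse of the full-rank matrix with columns $C_i, i\in S$) expresses the corresponding vertex as a fixed linear map applied to $b$, so its norm is at most $M_S\Vert b\Vert$ for a constant $M_S$ depending only on $\{C_i\}$; taking $M = \max_S M_S$ over the finitely many independent subsets gives the uniform bound $\Vert y\Vert \leq M\Vert b\Vert$ for all $y\in P_m$. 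The case $y=0$ (which forces $b=0$) is covered trivially.

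\medskip

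The main obstacle I anticipate is item (2): one must be careful that ``minimal'' is an order-theoretic notion (no smaller point in the coordinatewise order) rather than an extremality notion, so the identification of minimal points with vertices requires the local perturbation argument above rather than a direct appeal to convex-geometric facts. Items (1) and (3) are then routine consequences of the support-size induction and basic linear algebra, respectively.
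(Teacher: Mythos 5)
Your item (1) is correct and matches the paper's descent argument. But item (2) contains a genuine error, and it contaminates item (3).

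You claim that every minimal point is a vertex, via the perturbation $y\pm tw$ with $w=y-z$. The flaw is that $w$ need not have a definite sign: $y+tw<y$ requires $w\leq 0$ and $y-tw<y$ requires $w\geq 0$, but in general $w$ has mixed-sign coordinates, so neither perturbation is comparable to $y$ in the coordinatewise order and no contradiction with minimality arises. Indeed the conclusion itself is false. Take $n=2$, $m=1$, $C_1=1$, $C_2=2$, $b=2$, so $P=\{x\in\RR_+^2: x_1+2x_2=2\}$. The point $y=(1,\tfrac12)$ has $I(y)=\{1,2\}$ and $\{C_1,C_2\}=\{1,2\}$ is positively independent (a nonnegative combination $u_1+2u_2=0$ forces $u_1=u_2=0$), so $y$ is minimal; but $\{1,2\}$ is linearly dependent in $\RR^1$, so $y$ is not a vertex. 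It is, as the proposition asserts, a convex combination of the vertices $(2,0)$ and $(0,1)$. The statement of item (2) deliberately says ``convex combination of vertices,'' not ``vertex,'' precisely because positive independence is strictly weaker than linear independence. The correct argument (as in the paper) is: if $y$ is minimal but not a vertex, the vectors $\{C_i,i\in I(y)\}$ admit a linear relation $\sum u_iC_i=0$ whose coefficients necessarily take both signs (by positive independence applied to $u$ and to $-u$); moving along $\pm u$ until a coordinate first vanishes produces two points $x^{\pm}\in P_m$ with strictly smaller support whose convex hull contains $y$, and one iterates.

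For item (3), your bound on the vertices via the inverse of the full-rank submatrix is fine, but since minimal points are generally not vertices you still need to pass from the vertex bound to all of $P_m$; this follows from the corrected item (2) by convexity of the norm ball (the paper does it with the $\ell^1$ norm, which is linear on $\RR_+^n$). As written, your proof of (3) only bounds the vertices, not $P_m$.
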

\begin{proof} Let $x\in P$ and suppose that $x$ is not minimal (otherwise there is nothing to prove).
Then, by definition, there exist real numbers $u_i\geq 0, i\in I(x)$, such that
$$\sum_{i\in I(x)}u_iC_i=0$$
and at least one of the $u_i$'s is $>0$. For $i\not\in I(x)$, set $u_i=0$, and let $u\in \RR_+^n$ denote the vector with coordinates $u_i$. Now, we let $t\geq 0$ increase until the first time when $x-tu$ has a new $0$ coordinate, that is we define
$$t_0=\min\{x_i/u_i: u_i>0\}>0$$
and set
$$y=x-t_0u.$$
Clearly, this new point $y$ satisfies the inequality $0\leq y<x$, and
$$\sum_{i=1}^n y_i C_i=\sum_{i\in I(x)}(x_i-t_0u_i)C_i=b.$$
Thus $y<x$ also belongs to $P$ and, furthermore, satisfies the strict inclusion relation $I(y)\subsetneq I(x)$.
If $y$ is not minimal, we repeat the argument (with $y$, and so on) until a minimal point is reached. The process indeed terminates since the set of positive indices $I(*)$ is finite and strictly decreasing along the process. This proves the first part of the proposition.

The proof of the second part is quite similar. Let $x\in P$ be minimal and suppose it is not a vertex of $P$.
Then, the vectors $\{C_i, i\in I(x)\}$ are positively independent (by minimality) but not independent. Therefore, there exist real numbers $\{u_i, i\in I(x)\}$, not all zero, such that 
$$\sum_{i\in I(x)}u_iC_i=0$$
and at least one of them is negative ($<0$) and another is positive ($>0$) (since else $-u$ would contradict the assumption).
For $i\not\in I(x)$, set $u_i=0$, and let $u\in \RR^n$ denote the vector with coordinates $u_i$. Now, we let $t\geq 0$ increase until the first time when $x+tu$ has a new $0$ coordinate, that is we define 
$$t^+=\min\{-x_i/u_i: u_i<0\}>0\quad\mbox{ and }\quad x^+=x+t^+u.$$
Similarly, we define
$$t^-=\min\{x_i/u_i: u_i>0\}>0\quad\mbox{ and }\quad x^-=x-t^-u.$$
These new points $x^+$ and $x^-$ belong to $P_m$ since they clearly are $\geq 0$, satisfy the equality
$$\sum_{i=1}^n x_i^\pm C_i=\sum_{i\in I(x)}(x_i\pm t^\pm u_i)C_i=b,$$
and the strict inclusion relation $I(x^\pm)\subsetneq I(x)$. 
Furthermore, $x$ is a convex combination of $x^+$ and $x^-$ since
$$(t^++t^-)x=t^+ x^-+t^- x^+.$$
If $x^+$ and $x^-$ are both vertices of $P$, then the proof is finished. Else, we repeat the argument until $x$ is written as a convex combination of vertices. The process indeed terminates since the sets of positive indices $I(*)$ are finite and strictly decreasing along the process. This proves the second part of the proposition.

For the proof of the third assertion, we first notice that the Manhattan norm $\Vert x\Vert_1=\sum_{i=1}^n\vert x_i\vert$ coincides with the linear form $\sum_{i=1}^n x_i$ on $\RR_+^n$. Hence, it easily follows from the second assertion of the proposition that
$$\sup_{x\in P_m}\Vert x\Vert_1 = \max\{\Vert v\Vert_1 : v\mbox{ vertex of }P\}.$$
(Recall that there is at least one vertex and only a finite number of vertices.)
Let us denote by $\I$ the family of subsets $I\subset\inter{1}{n}$ such that $\{C_i, i\in I\}$ is a family of linearly independent vectors.
Set $$\RR^{(I)}=\{x\in\RR^n : x_i=0, \forall i\not\in I\}.$$
For all $I\in \I$, the linear mapping
$$C_I:\RR^{(I)}\to \RR^m, u\mapsto \sum_{i\in I}u_i C_i$$
is a bijection onto its image. Let us denote $C_I^{-1}$ its inverse.

If $v$ is a vertex of $P$, then $I=I(v)$ belongs to $\I$, and
$$\sum_{i\in I}v_iC_i=b \Leftrightarrow (v_i)_{i\in I}=C_I^{-1}b$$
Therefore, 
$$\Vert v\Vert_1=\Vert (v_i)_{i\in I}\Vert_1\leq \Vert C_I^{-1}\Vert_1\Vert b\Vert_1,$$
and $P_m$ is thus bounded by $M\Vert b\Vert_1$, where
$M=\max_{I\in \I}\Vert C_I^{-1}\Vert_1$.
\end{proof}

In view of application to the proof of Lemma~\ref{lem:minimizing_sequence}, we need to extend some of the consequences of Proposition~\ref{prop:polyhedron_1} to a larger class of polyhedra. So, let $L:\RR^n\to\RR^n$ be a linear mapping and 
$\phi:\RR^n\to\RR$ be a linear form. Fix $b\in\RR^n$, $c\in\RR$, and consider the polyhedron
$$P=\{x\in\RR_+^n : L(x)=b, \phi(x)\geq c\}.$$

\begin{corollary}
\label{cor:polyhedron_2}
Assume $P\not=\emptyset$. There exists a number $M$, only depending on $L$ and $\phi$, such that, 
for all $x\in P$, there exists $y\in P$ with $y\leq x$ and $\Vert y\Vert\leq M(\Vert b\Vert+\vert c\vert)$.
\end{corollary}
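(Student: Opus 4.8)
The plan is to reduce Corollary~\ref{cor:polyhedron_2} to Proposition~\ref{prop:polyhedron_1} by rewriting the single inequality constraint $\phi(x)\geq c$ as an equality constraint at the cost of introducing one slack variable. Concretely, I would work in $\RR^{n+1}=\RR^n\times\RR$ with the extra coordinate $x_{n+1}\geq 0$, and consider the polyhedron
$$
\widetilde P=\left\{(x,x_{n+1})\in\RR_+^{n+1} : L(x)=b,\ \phi(x)-x_{n+1}=c\right\}.
$$
The map $(x,x_{n+1})\mapsto x$ is a bijection from $\widetilde P$ onto $P$ (given $x\in P$, set $x_{n+1}=\phi(x)-c\geq 0$), and it is order-preserving in the sense that if $(y,y_{n+1})\leq(x,x_{n+1})$ in $\RR_+^{n+1}$ then in particular $y\leq x$ in $\RR_+^n$. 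Note that $\widetilde P$ is exactly of the form treated in Proposition~\ref{prop:polyhedron_1}: the equality constraints $L(x)=b$, $\phi(x)-x_{n+1}=c$ can be written as $\sum_{i=1}^{n+1}x_i\widetilde C_i=\widetilde b$, where $\widetilde C_i\in\RR^{n+1}$ is the $i$-th column of the combined constraint matrix (for $i\leq n$, $\widetilde C_i$ stacks the $i$-th column of $L$ with $\phi(e_i)$; and $\widetilde C_{n+1}$ stacks the zero vector of $\RR^n$ with $-1$), and $\widetilde b=(b,c)\in\RR^{n+1}$.

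The steps are then: first, given $x\in P$, lift it to $\widehat x=(x,\phi(x)-c)\in\widetilde P$. Second, apply part~(1) of Proposition~\ref{prop:polyhedron_1} to $\widetilde P$ and $\widehat x$ to obtain a minimal point $\widehat y=(y,y_{n+1})\in\widetilde P$ with $\widehat y\leq\widehat x$; in particular $y\leq x$ and, since $\widehat y\in\widetilde P$ projects into $P$, we get $y\in P$. Third, apply part~(3) of the same proposition to bound $\Vert\widehat y\Vert\leq M'\Vert\widetilde b\Vert$ for a constant $M'$ depending only on the $\widetilde C_i$'s, hence only on $L$ and $\phi$; since $\Vert y\Vert\leq\Vert\widehat y\Vert$ and $\Vert\widetilde b\Vert=\Vert(b,c)\Vert$ is comparable to $\Vert b\Vert+\vert c\vert$ up to a universal constant (equivalence of norms on $\RR^{n+1}$), we get $\Vert y\Vert\leq M(\Vert b\Vert+\vert c\vert)$ for a suitable $M=M(L,\phi)$. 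This completes the argument.

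I do not expect any serious obstacle here; the one point requiring a little care is bookkeeping with the norms, since Proposition~\ref{prop:polyhedron_1}(3) is stated with the Manhattan norm $\Vert\cdot\Vert_1$ while the corollary is stated for an unspecified norm $\Vert\cdot\Vert$ — this is harmless because all norms on the finite-dimensional spaces $\RR^n$, $\RR^{n+1}$, $\RR^m$ are equivalent, and the comparison constants can be absorbed into $M$. The only genuinely structural observation is the slack-variable trick turning the mixed equality/inequality system into the pure equality system of the proposition, together with the remark that projecting away the slack coordinate preserves both the partial order and membership in the respective polyhedra.
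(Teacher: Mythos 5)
Your argument is correct and is essentially the paper's own proof: the paper introduces the same slack variable $x_{n+1}=\phi(x)-c$ to embed $P$ as a pure-equality polyhedron $P'\subset\RR_+^{n+1}$, applies parts (1) and (3) of Proposition~\ref{prop:polyhedron_1} there, and projects back, with the norm bookkeeping handled exactly as you describe.
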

\begin{proof}
We use the very standard trick in linear programming problems that consists in increasing the dimension 
so that the new form of polyhedra fits with the previous one. Indeed, if we add the ``ghost'' equation $x_{n+1}=\phi(x)-c$ to the system of equalities and inequality that defines $P$, then the inequality $\phi(x)\geq c$ becomes $x_{n+1}\geq 0$ and the system of equations
$$
\begin{cases}
L(x)=b\\
x_{n+1}=\phi(x)-c
\end{cases}
$$
can be written as
$$L'(x')=b'$$
where $x'=(x,x_{n+1})$, $b'=(b,-c)$ and $L'\in M_{n+1}(\RR)$ only depends on $L$ and $\phi$.
So, define 
$$P'=\{x'\in\RR_+^{n+1}: L'(x')=b'\}.$$
Then, the mapping $\Psi: x\mapsto x'=(x,\phi(x)-c)$ is a bijection from $P$ onto $P'$.
Applying Proposition~\ref{prop:polyhedron_1} to $P'$, we obtain the existence of a number $M$, only depending on $L'$ (and thus only depending on $L$ and $\phi$), such that any minimal point $y'$ of $P'$ satisfies
$$\Vert y'\Vert_1\leq M\Vert b'\Vert_1=M(\Vert b\Vert_1+\vert c\vert).$$
We also know, from the same proposition, that for any $x'=\Psi(x)\in P'$, there exists $y'\in P'$ such that 
$y'\leq x'$ and $y'$ is minimal. Taking $y=\Psi^{-1}(y')$ gives the expected result.
\end{proof}

\section*{Acknowledgments}
The author would like to thank Kilian Raschel for his valuable comments.

\end{document}